\newcommand{\average}{{\mathchoice {\kern1ex\vcenter{\hrule
height.4pt width 6pt depth0pt} \kern-9.7pt}
{\kern1ex\vcenter{\hrule height.4pt width 4.3pt depth0pt}
\kern-7pt} {} {} }}
\newcommand{\abs}[1]{\left\vert#1\right\vert}
\newcommand{\trace}{\text{trace}}
\newcommand{\R}{{\mathbb R}} 
\newcommand{\e}{\varepsilon} 
\newcommand{\calF}{{\mathcal F}}
\newcommand{\calO}{{\mathcal O}}
\newcommand{\calM}{{\mathcal M}}
\newcommand{\bh}{{\bar h}}
\newcommand{\p}{\partial}
\newenvironment{myindentpar}[1]%
{\begin{list}{}%
         {\setlength{\leftmargin}{#1}}%
         \item[]%
}
{\end{list}}
\theoremstyle{plain}
\newtheorem{theorem}{Theorem}[section]
\newtheorem{corollary}[theorem]{Corollary}
\newtheorem{lemma}[theorem]{Lemma}
\newtheorem{proposition}[theorem]{Proposition}
\begin{document}
\date{May 31, 2012}
\title[Boundary sections of the Monge--Amp\`ere equation]{Geometric properties of 
boundary sections of solutions to the Monge--Amp\`ere equation and applications}
\author{Nam Q. Le}
\address{Department of Mathematics, Columbia University, New York, NY 10027, USA\\ and School of Engineering, Tan Tao University, Long An, Vietnam}
\email{\tt  namle@math.columbia.edu, nam.le@ttu.edu.vn}
\author{Truyen Nguyen}
\address{Department of Mathematics, The University of Akron, Akron, OH 44325, USA}
\email{tnguyen@uakron.edu}

\begin{abstract}
In this paper, we establish several geometric properties of boundary 
sections of convex solutions to the Monge-Amp\`ere equations: the engulfing and separating properties and volume 
estimates. As applications, we prove a  covering lemma of Besicovitch type, a covering theorem and a 
strong type $p-p$ estimate for the maximal function corresponding to boundary sections. Moreover, we show that the
Monge-Amp\`ere setting forms a space of homogeneous type.  
\end{abstract}
\maketitle

\setcounter{equation}{0}

\section{Introduction}
 In recent years, there has been a growing interest in studying boundary regularity of solutions to the Monge-Amp\`ere equation and its linearization, that is, 
the linearized Monge-Amp\`ere equation. Solutions of many important problems in Analysis and Geometry require a deep understanding of boundary behaviors of the above 
Monge-Amp\`ere type equations. Among those, one can mention the problems of global regularity of the affine maximal surface equation \cite{TW1, TW2, L} and  
Abreu's equation in the context of existence of K\"ahler metric of constant scalar curvature \cite{D1, D2, D3, D4, Zh}. In these papers, the properties of boundary 
cross sections of solutions to the Monge-Amp\`ere equation play an important role as those of interior sections in the landmark paper \cite{C2} where Caffarelli 
discovered surprising interior $W^{2, p}$ estimates for solutions to the Monge-Amp\`ere equation with  right hand side being continuous and bounded away from $0$ and $\infty$.\\

The notion of sections (or cross sections) of solutions to the Monge-Amp\`ere equation
\begin{equation}\label{MA}
\det D^2\phi = g \quad \mbox{in}\quad \Omega
\end{equation}
 was first introduced and studied by Caffarelli \cite{C1,C2,C3,C4}. Sections are defined 
as sublevel sets of convex solutions after subtracting their supporting hyperplanes. Understanding the geometry of sections is essential in obtaining sharp regularity properties for 
solutions of \eqref{MA}. As a matter of fact, the structure of equation \eqref{MA} is ultimately related to that of sections of its solutions, and by studying the shape of interior sections Caffarelli derived fundamental interior regularity estimates for \eqref{MA} in the above mentioned papers. When the right hand side of the Monge-Amp\`ere equation is only bounded, sections of 
 solutions in the sense of Aleksandrov can have degenerate geometry. However, in many applications in Analysis and Geometry involving equations of Monge-Amp\`ere type, we would like these sections to have properties similar to Euclidean balls as in uniformly elliptic equations. This is the case of interior sections through the work of  Caffarelli \cite{C1,C3, C4}, Caffarelli-Guti\'errez \cite{CG1, CG2} and Guti\'errez-Huang \cite{GH}. The case of boundary sections is less well understood. However, thanks to Savin's Localization theorem \cite{S1, S2} at the boundary for solutions of \eqref{MA}, we expect many properties of interior sections hold also for boundary ones. This is the subject of our present paper.\\

The purpose of this paper is to investigate several important geometric properties of  boundary sections of convex solutions to the Monge-Amp\`ere equation \eqref{MA} 
with right hand side bounded away from $0$ and $\infty$ and with smooth boundary data: engulfing and separating 
properties, and volume estimates. As applications, we prove a 
 covering lemma of Besicovitch type and employ it to prove a covering theorem  and a strong type $p-p$ estimate for 
the maximal function with respect  to boundary sections.  Moreover,  we introduce a quasi-distance induced by boundary 
sections and show that the structure of our Monge-Amp\`ere equation 
 gives rise to a space of homogeneous type.
This allows us to place the  Monge-Amp\`ere setting  in a more general context where many real analytic problems have 
been studied, see  \cite{CW, DGL}. 
Our results are boundary version of those established by Caffarelli-Guti\'errez \cite{CG1, CG2}, Guti\'errez-Huang \cite{GH} and Aimar-Forzani-Toledano \cite{AFT} 
for interior sections of solutions to  equation \eqref{MA} (see also the book by Guti\'errez \cite{G}). The results in this paper are crucial for our studies in \cite{LN1, LN2} about boundary regularity for solutions to the
linearized  Monge-Amp\`ere equation 
\begin{equation}\label{LMA}
\trace(\Phi D^2 u)
= f \quad \mbox{in}\quad \Omega,
\end{equation}
where $\Phi := (\det D^2 \phi) ~ (D^2\phi)^{-1}$  with $\phi$ being a convex solution of \eqref{MA}. In \cite{LN1, LN2}, we investigate  equation \eqref{LMA}  and establish 
global $W^{2,p}$  and $W^{1, p}$ estimates for its solutions which are  boundary version of interior estimates obtained recently in \cite{GN1,GN2}.\\

The rest of the paper is organized as follows. We state our main results in Section~\ref{main_res}. In Section~\ref{data_sec}, 
we recall the main tool to study geometric properties of boundary sections of solutions to the Monge-Amp\`ere equation: 
the Localization theorem at the boundary for solutions.
Geometric properties of boundary sections are established in Section~\ref{geo_sec}. 
In Section~\ref{covering_sec}, we prove a Besicovitch-type covering lemma, a covering theorem and a strong type $p-p$ estimate for the maximal function corresponding to boundary sections.
Finally, in Section~\ref{sec:quasi-distance}, we 
 show that the Monge-Amp\`ere  setting forms a space of homogeneous type.
 
\section{Statement of the main results}
\label{main_res}
The results in this paper hold under the following global information on the convex domain $\Omega$ and the convex function $\phi$.
We assume that 
\begin{equation}
\Omega\subset B_{1/\rho} ~\text{contains an interior ball of radius $\rho$ tangent to}~ \p 
\Omega~ \text{at each point on} ~\p \Omega.
\label{global-tang-int}
\end{equation}
Let $\phi : \overline \Omega \rightarrow \R$, $\phi \in C^{0,1}(\overline 
\Omega) 
\cap 
C^2(\Omega)$  be a convex function satisfying
\begin{equation}\label{eq_u}
\det D^2 \phi =g, \quad \quad 0 <\lambda \leq g \leq \Lambda \quad \text{in $\Omega$}.
\end{equation}
Assume further that on $\p \Omega$, $\phi$ 
separates quadratically from its 
tangent planes,  namely
\begin{equation}
\label{global-sep}
 \rho\abs{x-x_{0}}^2 \leq \phi(x)- \phi(x_{0})-\nabla \phi(x_{0}) \cdot (x- x_{0})
 \leq \rho^{-1}\abs{x-x_{0}}^2, ~\forall x, x_{0}\in\p\Omega.
\end{equation}
The section of $\phi$ centered at $x\in \overline \Omega$ with height $h$ is defined by
\begin{equation*}
 S_{\phi} (x, h) :=\big\{y\in \overline \Omega: \quad \phi(y) < \phi(x) + \nabla \phi(x)\cdot (y- x) +h\big\}.
\end{equation*}
For  $x\in \Omega$, we denote by $\bar{h}(x)$ the maximal height of all sections of $\phi$ centered at $x$ and contained in $\Omega$, that is,
$$\bar{h}(x): =\sup\{h\geq 0| \quad S_{\phi}(x, h)\subset \Omega\}.$$
In this case, $S_{\phi}(x, \bar{h}(x))$  is called the maximal interior section of $\phi$ with center $x\in\Omega$. 
In what follows, we will drop the dependence on $\phi$ of sections when no confusion arises.\\

We denote by $c, \bar{c}, C, C_{1}, C_{2}, \theta_{0}, \theta_{\ast}, \cdots$, positive constants depending only on $\rho$, $\lambda$, $\Lambda$, 
$n$, and their values may change from line to line whenever 
there is no possibility of confusion. We refer to such constants as {\it universal constants}.\\

Our first result is the engulfing property of sections $\{S(x, t)\}$. 
\begin{theorem}
\label{engulfing2}
Assume that the convex domain $\Omega$ and the convex function $\phi$ satisfy \eqref{global-tang-int}--\eqref{global-sep}. There exists $\theta_{\ast}>0$ depending only 
on $\rho, \lambda, \Lambda$ and $n$ such that if $y\in S(x, t)$ with $x\in\overline{\Omega}$ and $t>0$, then $S(x, t)\subset S(y, \theta_{\ast}t).$
\end{theorem}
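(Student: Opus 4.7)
The plan is to reduce the theorem to a universal bound on the gradient variation of $\phi$ across sections, and then to obtain that bound by combining John's lemma and the interior shape theorem of Caffarelli with Savin's boundary Localization theorem recalled in Section~\ref{data_sec}.

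\textbf{Reduction to a gradient-variation estimate.} Given $y \in S(x,t)$ and an arbitrary $z \in S(x,t)$, convexity of $\phi$ and the definition of the section yield
\begin{align*}
\phi(z) - \phi(y) - \nabla\phi(y)\cdot (z-y)
&= \bigl[\phi(z) - \phi(x) - \nabla\phi(x)\cdot (z-x)\bigr] \\
&\quad - \bigl[\phi(y) - \phi(x) - \nabla\phi(x)\cdot (y-x)\bigr] \\
&\quad + \bigl(\nabla\phi(x) - \nabla\phi(y)\bigr)\cdot (z-y),
\end{align*}
where the first bracket is strictly less than $t$ and the second is nonnegative by convexity. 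Hence
\[
\phi(z) - \phi(y) - \nabla\phi(y)\cdot (z-y) < t + |\nabla\phi(x) - \nabla\phi(y)|\cdot |z-y|,
\]
and the theorem will follow with $\theta_{\ast} = 1 + C$ provided one can establish the universal gradient-variation estimate
\[
|\nabla\phi(x) - \nabla\phi(y)|\cdot \diam S(x,t) \leq C\, t. \qquad (\star)
\]

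\textbf{Proof of $(\star)$ via normalization.} I would split into cases according to where the center $x$ lies. When $S(x,t)$ stays well inside $\Omega$, $(\star)$ is essentially the interior engulfing property of Caffarelli--Guti\'errez \cite{CG1}: after affinely normalizing $S(x,t)$ via John's lemma so that it is comparable to a Euclidean ball, the estimate is a consequence of Caffarelli's interior $C^{1,\alpha}$ regularity applied to the rescaled potential. When $S(x,t)$ reaches $\p\Omega$, I would select a boundary point $x_0 \in \p\Omega$ with $\dist(x,\p\Omega)$ roughly minimized and apply Savin's Localization theorem at $x_0$ to produce an affine map $T$, of uniformly controlled distortion, which sends the relevant section to a domain comparable to a half-ball. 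The rescaled potential $\tilde\phi$ then solves a normalized Monge--Amp\`ere equation on this half-ball with right hand side bounded above and below, and Savin's boundary $C^{1,\alpha}$ estimates supply the gradient oscillation bound that yields $(\star)$ after pulling back by $T^{-1}$.

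\textbf{Main obstacle.} The real difficulty is not any single case but the interplay between them: the argument must produce a single universal constant $\theta_{\ast}$ regardless of whether $x$ is deep in the interior, lies on $\p\Omega$, or sits at an intermediate scale $\dist(x,\p\Omega)\sim\sqrt{t}$, and it must handle the asymmetric possibility $x \in \p\Omega$, $y\in\Omega$. The quadratic separation hypothesis \eqref{global-sep} and the uniform tangent-ball condition \eqref{global-tang-int} are precisely what make the interior and boundary normalizations compatible, by keeping the ratio of the distance-to-boundary scale and the height scale $\sqrt{t}$ under uniform control; this is what lets the two shape theorems be glued into a single estimate $(\star)$ with a universal constant.
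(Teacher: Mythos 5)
Your algebraic reduction is correct, but the inequality $(\star)$ you reduce to is strictly stronger than what is needed, and it fails near the boundary: the Cauchy--Schwarz step that produces $(\star)$ destroys the affine covariance that the boundary normalization requires. To see the issue, let $x=0\in\p\Omega$ be a boundary center, normalize so that $\phi(0)=0$, $\nabla\phi(0)=0$, and let $A_h$ be Savin's shear with $\|A_h\|,\|A_h^{-1}\|\lesssim|\log h|$. For $Y\in S(0,t)$ with $h\sim t$ one only has $|\nabla\phi(Y)|\lesssim h^{1/2}\|A_h\|\,|\nabla\phi_h(y)|\lesssim t^{1/2}|\log t|$, and similarly $\diam S(0,t)\lesssim t^{1/2}|\log t|$. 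Multiplying gives $t\,|\log t|^2$, not $t$, and these two losses come from the same shear $A_h$ acting on the gradient and on the displacement in complementary ways, so they do not cancel in the product of norms. Thus the proposed route to a universal $\theta_{\ast}=1+C$ through $(\star)$ would blow up as $t\to 0$.

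What does hold universally, and what the paper actually uses, is the affinely covariant inner-product bound
\[
\bigl(\nabla\phi(X)-\nabla\phi(Y)\bigr)\cdot(Z-Y)\;\le\;C\,t\qquad\text{for }Y,Z\in S(X,t),
\]
because under the rescaling $X\mapsto x=h^{-1/2}A_hX$ one has exactly $\nabla\phi(X)\cdot(Z-Y)=h\,\nabla\phi_h(x)\cdot(z-y)$: the matrices $A_h^{T}$ and $A_h^{-1}$ cancel, and in rescaled coordinates both $|\nabla\phi_h|$ and $|z-y|$ are bounded by fixed universal constants (Lemma~\ref{sep-lem}). This is precisely the estimate carried out in the paper's Lemma~\ref{engulfing}, which proves engulfing when the center lies on $\p\Omega$. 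The paper then handles a general center not by a single estimate $(\star)$, but by the dichotomy of Proposition~\ref{dicho}: either $S(x,2t)\subset\Omega$ (use interior engulfing from \cite{G}) or $S(x,2t)\subset S(z,\bar ct)$ for some $z\in\p\Omega$ (apply Lemma~\ref{engulfing} to this boundary section and conclude). Your ``main obstacle'' paragraph correctly identifies the gluing difficulty, but the proposed resolution -- that the two normalizations are compatible enough for $(\star)$ to hold uniformly -- is exactly where the logarithmic distortion enters and breaks the argument. If you retain your reduction, replace $(\star)$ by the inner-product bound above and supply a case split analogous to the dichotomy; otherwise the argument does not close.
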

In this case, we say that the sections $\{S(x, t)\}$ of $\phi$  satisfy the engulfing property with the constant $\theta_{\ast}$.\\

The engulfing property of sections will be shown to be equivalent to the separating property of sections as stated in the following.
\begin{proposition}
\label{separating}
(i) Assume that the convex domain $\Omega$ and the convex function $\phi$ satisfy \eqref{global-tang-int}--\eqref{global-sep}.
 Let $\theta_{\ast}$ be the constant in 
 Theorem~\ref{engulfing2}. Then,  the sections $\{S(x, t)\}$ of $\phi$ satisfy the separating property with the constant $\theta_{\ast}^2$, namely, if $y\not\in S(x, t)$, then
$$S(y, \frac{t}{\theta_{\ast}^2})\cap S(x, \frac{t}{\theta_{\ast}^2})=\emptyset.$$
(ii) Conversely, assume that the sections $\{S(x, t)\}$ of a convex function $\phi$ defined on a convex domain $\Omega$ satisfy the separating property with the constant $\theta$. Then the sections $\{S(x, t)\}$ satisfy the engulfing property with the constant $\theta^2$.
\end{proposition}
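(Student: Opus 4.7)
The proposition is purely formal: it relates two abstract properties of the family $\{S(x,t)\}$, so the plan is to derive each implication directly from the defining statements without touching the Monge--Amp\`ere equation itself. The only ambient fact I will invoke is that $y\in S(y,t)$ for every $y$ and every $t>0$, which follows immediately from the definition of a section.

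For part (i), I would argue by contradiction. Suppose $y\notin S(x,t)$ but there exists $z\in S(x,t/\theta_{\ast}^2)\cap S(y,t/\theta_{\ast}^2)$. Applying Theorem~\ref{engulfing2} to $z\in S(x,t/\theta_{\ast}^2)$ yields $S(x,t/\theta_{\ast}^2)\subset S(z,t/\theta_{\ast})$, so in particular $x\in S(z,t/\theta_{\ast})$. A second application of the engulfing property, this time to $x\in S(z,t/\theta_{\ast})$, gives $S(z,t/\theta_{\ast})\subset S(x,t)$. On the other hand, engulfing applied to $z\in S(y,t/\theta_{\ast}^2)$ shows $y\in S(z,t/\theta_{\ast})$. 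Chaining these inclusions produces $y\in S(x,t)$, contradicting the hypothesis. Hence $S(y,t/\theta_{\ast}^2)\cap S(x,t/\theta_{\ast}^2)=\emptyset$.

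For part (ii), I would again argue by contrapositive. Assume separating with constant $\theta$ and suppose $y\in S(x,t)$ but some $w\in S(x,t)\setminus S(y,\theta^{2}t)$ exists. The first step is to show $x\in S(y,\theta t)$: if not, separating (applied with height $\theta t$) would give $S(y,t)\cap S(x,t)=\emptyset$, yet $y$ lies in both sides since $y\in S(x,t)$ by hypothesis and $y\in S(y,t)$ trivially. By the identical argument with $w$ in place of $y$ (using $w\in S(x,t)$), one obtains $x\in S(w,\theta t)$. Thus $x\in S(y,\theta t)\cap S(w,\theta t)$. But $w\notin S(y,\theta^{2}t)$ combined with the separating property (at height $\theta^{2}t$) yields $S(y,\theta t)\cap S(w,\theta t)=\emptyset$, a contradiction. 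Therefore $S(x,t)\subset S(y,\theta^{2}t)$.

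Neither direction presents a genuine obstacle: the only care needed is tracking the heights carefully so that the factor $\theta_{\ast}$ (resp.\ $\theta$) is inserted each time one invokes the hypothesis, and verifying that the trivial fact $y\in S(y,t)$ suffices to seed the contradiction in part (ii). Because the argument is entirely formal, no geometric information from \eqref{global-tang-int}--\eqref{global-sep} or from the Localization theorem is used at this stage; all the analytic content is already encoded in Theorem~\ref{engulfing2}.
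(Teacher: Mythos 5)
Your proof is correct and follows essentially the same route as the paper's: part (i) applies the engulfing property twice to a putative common point $z$ to derive $y\in S(x,t)$, and part (ii) uses the separating hypothesis to place $x$ in $S(y,\theta t)\cap S(w,\theta t)$ and then contradicts this via $w\notin S(y,\theta^2 t)$. The only difference is expository (you spell out the contrapositive of the separating property more explicitly), not structural.
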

A key in the proof of Theorem \ref{engulfing2} is a dichotomy for sections of solutions to the Monge-Amp\`ere equation: any section is either an interior section or included in a boundary section with comparable height. Thus, when dealing with sections, we can focus our attention to only interior sections and boundary sections. The precise statement is as follows.

\begin{proposition} 
\label{dicho}
Assume that the convex domain $\Omega$ and the convex function $\phi$ satisfy \eqref{global-tang-int}--\eqref{global-sep}. 
Let $S(x_{0}, t_{0})$ be a section of $\phi$ with $x_0\in \overline{\Omega}$ and $t_{0}>0$. Then  one of the following is true:
\begin{myindentpar}{1cm}
(i) $S(x_{0}, 2t_{0})$ is an interior section, that is, $S(x_{0}, 2t_{0})\subset \Omega$;\\
(ii) $S(x_{0}, 2 t_{0})$ is included in a boundary section with comparable height, that is, there exists $z\in\partial\Omega$ such that
 $$ S(x_{0}, 2t_{0})\subset S(z, \bar{c}t_{0}).$$
\end{myindentpar}
Here $\bar{c}>1$ is  a constant   depending only on  $\rho,\lambda, \Lambda$ and $n$. 
\end{proposition}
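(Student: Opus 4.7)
The plan is to split into cases based on whether $S(x_0, 2t_0)$ is swallowed by $\Omega$ or escapes to $\partial\Omega$, and, in the escaping case, to locate a boundary center $z$ on the maximal interior section at $x_0$ and use Savin's boundary localization (recalled in Section~\ref{data_sec}) to establish the inclusion. If $x_0\in\partial\Omega$, the choice $z:=x_0$ with any $\bar c\geq 2$ trivially gives (ii). If $x_0\in\Omega$ and the maximal interior height $\bar h(x_0)$ is at least $2t_0$, then (i) holds by the very definition of $\bar h(x_0)$. In the remaining case $x_0\in\Omega$ and $\bar h(x_0)<2t_0$, the maximality forces $\overline{S(x_0,\bar h(x_0))}$ to touch $\partial\Omega$ at some point $z$, which we fix.

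For any $y\in S(x_0,2t_0)$, combining the section-defining inequality with the identity obtained by adding and subtracting the linearization of $\phi$ at $z$ yields
\begin{equation*}
\phi(y)-\phi(z)-\nabla\phi(z)\cdot(y-z)\;<\;2t_0\;+\;h(x_0,z)\;+\;\bigl(\nabla\phi(x_0)-\nabla\phi(z)\bigr)\cdot(y-x_0),
\end{equation*}
where $h(x_0,z):=\phi(x_0)-\phi(z)-\nabla\phi(z)\cdot(x_0-z)\geq 0$. Thus $S(x_0,2t_0)\subset S(z,\bar c t_0)$ would follow from universal bounds of the form $h(x_0,z)\leq C t_0$ and $\bigl|(\nabla\phi(x_0)-\nabla\phi(z))\cdot(y-x_0)\bigr|\leq C t_0$.

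The first bound is the dual-height estimate: since $\overline{S(x_0,\bar h(x_0))}$ touches $\partial\Omega$ precisely at $z$, Savin's localization theorem applied at $z$ compares both $S(x_0,\bar h(x_0))$ and the boundary section $S(z,C\bar h(x_0))$ with a common normalized half-ellipsoid, yielding $x_0\in S(z,C\bar h(x_0))$, i.e., $h(x_0,z)\leq C\bar h(x_0)<2Ct_0$. The second bound exploits the same half-ellipsoidal shape of boundary sections together with the quadratic separation~\eqref{global-sep}: in Savin's affine frame at $z$, the section $S(z,h)$ is sandwiched between ellipsoids with $n-1$ tangential semi-axes of order $\sqrt h$ and one normal semi-axis of order $h$; decomposing $y-x_0$ and $\nabla\phi(x_0)-\nabla\phi(z)$ into tangential and normal components in this frame controls each piece of the cross term by a constant multiple of $t_0$.

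The principal obstacle is this last step: translating Savin's geometric description of sections into a quantitative control of $\nabla\phi$ at an interior point versus a boundary point, because \eqref{global-sep} constrains $\phi$ only on $\partial\Omega$ while $x_0$ is interior; this is where the normalizing affine map from Savin's theorem, rather than the raw assumptions, must be used. Once both bounds are secured, choosing $\bar c$ to dominate the various absorbed constants yields $S(x_0,2t_0)\subset S(z,\bar c t_0)$ and completes case (ii).
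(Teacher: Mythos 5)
Your overall skeleton is the same as the paper's: split off the trivial boundary and interior cases, and in the remaining case $\bar{h}(x_0)<2t_0$ take $z$ to be the point where the maximal interior section $\overline{S(x_0,\bar{h}(x_0))}$ touches $\p\Omega$, normalize so that $z=0$, $\phi(0)=0$, $\nabla\phi(0)=0$, and use Proposition~\ref{tan_sec} to get $\nabla\phi(x_0)=a e_n$ with $a\in[k_0\bar{h}^{1/2},k_0^{-1}\bar{h}^{1/2}]$. Your first bound, $h(x_0,z)\leq C\bar{h}\leq 2Ct_0$, is fine and is implicit in the paper's identity $0=\phi(x_0)-\nabla\phi(x_0)\cdot x_0+\bar{h}$.

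The genuine gap is your second bound. After normalization the cross term is $a\,e_n\cdot(y-x_0)$, and since $a\simeq\bar{h}^{1/2}\lesssim t_0^{1/2}$ you need $e_n\cdot(y-x_0)\lesssim t_0^{1/2}$ \emph{uniformly for $y\in S(x_0,2t_0)$}. But the only a priori control on $y_n$ is $\diam\Omega$, which gives $a\,y_n\lesssim t_0^{1/2}$, not $Ct_0$; any sharper bound on the $e_n$-extent of $S(x_0,2t_0)$ presupposes that this section already lies in a boundary section of height comparable to $t_0$ — exactly the conclusion being proved. Decomposing in ``Savin's frame at $z$'' does not repair this: the half-ellipsoid description of $S(z,h)$ from Theorem~\ref{main_loc} constrains only points already known to belong to $S(z,h)$, so the step you yourself flag as ``the principal obstacle'' is circular as written, and it is not a routine gradient-comparison issue (the gradient part is already handled by Proposition~\ref{tan_sec}). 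The paper breaks the circularity differently: it fixes $t^*:=k_0^{-4}\bar{h}+2(2t_0-\bar{h})$ and argues by contradiction at a point $x\in\p S_\phi(0,t^*)\cap S_\phi(x_0,2t_0)$, which exists by convexity since both closures contain $0$; there Theorem~\ref{main_loc} applied to the boundary section $S_\phi(0,t^*)$ gives $x_n\leq k^{-1}(t^*)^{1/2}$, while $t^*=\phi(x)<a x_n+2t_0-\bar{h}$ forces $x_n$ to be large, and squaring yields $(2t_0-\bar{h})^2<0$, a contradiction. You would also need to treat separately the range $t_0>c$, where heights exceed the threshold $k$ of the Localization Theorem; the paper does this with Lemma~\ref{global-gradient} ($\overline{\Omega}\subset S_\phi(0,M)$), a case your sketch omits.
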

As an application of the dichotomy of sections, we obtain the following volume growth of sections.

\begin{corollary}\label{volume_growth} Assume that the convex domain $\Omega$ and the convex 
function $\phi$ satisfy \eqref{global-tang-int}--\eqref{global-sep}.  Then, 
there exist constants  $c_0, C_{1}, C_{2}$  depending only on $\rho, \lambda, \Lambda$ and $n$ such that for any section $S_{\phi}(x, t)$ with $x\in \overline{\Omega}$  and $t\leq c_0$, we have
\begin{equation}
\label{big_sec_vol}
C_{1} t^{n/2}\leq |S_{\phi}(x, t)|\leq C_{2} t^{n/2}.
\end{equation}
\end{corollary}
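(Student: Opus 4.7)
The corollary is stated as a consequence of the dichotomy Proposition \ref{dicho}, so my strategy is to split into the two regimes it provides and appeal to the appropriate volume estimates in each. Applying Proposition \ref{dicho} to $S_{\phi}(x_0, t)$, either (a) $S(x_0, 2t) \subset \Omega$ is purely interior, or (b) there exists $z \in \partial\Omega$ with $S(x_0, 2t) \subset S(z, \bar c\,t)$.

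In case (a), $S(x_0, t)$ is itself an interior section of a Monge--Amp\`ere solution with $\lambda \leq g \leq \Lambda$, and Caffarelli's classical two-sided volume estimate for interior sections (John's normalization lemma combined with the Alexandrov maximum principle applied to the convex function $v(y) = \phi(y) - \phi(x_0) - \nabla\phi(x_0)\cdot(y - x_0) - t$, which vanishes on $\partial S(x_0, t)$ because the section is interior) gives $c\,t^{n/2} \leq |S(x_0, t)| \leq C\, t^{n/2}$ directly. In case (b), Savin's Localization theorem at the boundary point $z$, recalled in Section \ref{data_sec}, produces an affine map $T = T_{z,t}$ of universally bounded distortion under which $T\bigl(S(z, \bar c\, t)\bigr)$ is trapped between two half-balls of radius comparable to $\sqrt t$. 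The upper bound in \eqref{big_sec_vol} then follows from the inclusion $S(x_0, t) \subset S(z, \bar c\,t)$ and $|S(z, \bar c\,t)| \leq C t^{n/2}$. For the matching lower bound I would renormalize by $T$: the pushed-forward potential $\tilde \phi = \phi \circ T^{-1}$ still satisfies $\lambda \leq \det D^2 \tilde\phi \leq \Lambda$ on a domain of universal shape and still enjoys the quadratic separation \eqref{global-sep} on its flat boundary face, while $T(S(x_0, t))$ becomes a section at height comparable to $1$. A universal volume bound for such normalized sections (interior Caffarelli if $T(x_0)$ sits well inside the rescaled domain, and one more application of Savin's theorem if $T(x_0)$ lies near the flat face) gives $|T(S(x_0, t))| \geq c$, which pulls back to $|S(x_0, t)| \geq c\, t^{n/2}$ since $|\det T|$ is comparable to $t^{-n/2}$.

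\emph{Main obstacle.} The upper bound is essentially immediate once one has Proposition \ref{dicho} together with Savin's theorem. The delicate point is the lower bound in the boundary regime: one must rule out that $S(x_0, t)$ collapses against $\partial\Omega$ even while lying inside the controlled boundary section $S(z, \bar c\,t)$. This is exactly what the quadratic separation hypothesis \eqref{global-sep} prevents via the half-ball shape information in Savin's Localization theorem, and it is also the reason for the threshold $t \leq c_0$, since for larger heights boundary sections are no longer comparable to half-balls.
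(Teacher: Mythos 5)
The upper bound and the purely interior case (a) are handled essentially as in the paper, but your plan for the lower bound in case (b) has a genuine gap. You propose to renormalize by the Savin map $T$ at $z$, get $|T(S(x_0,t))|\geq c$ via ``interior Caffarelli if $T(x_0)$ sits well inside the rescaled domain, and one more application of Savin's theorem if $T(x_0)$ lies near the flat face,'' and then pull back. Neither branch of that alternative works as stated: even if $T(x_0)$ is far from the flat boundary of the renormalized domain, the renormalized section at height $\sim 1$ can still extend to touch that boundary, so the interior two-sided volume estimate is not directly applicable; and Savin's Localization Theorem (as well as Proposition~\ref{tan_sec}) is about sections centered \emph{at} boundary points or sections tangent to the boundary, not about sections centered at interior points near the boundary. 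Trying to patch this by invoking a ``volume bound for normalized sections near the flat face'' is exactly the statement you are trying to prove, so the argument is at best incomplete and at worst circular.

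The paper resolves this with a more structural argument that you miss. With $\bar h := \bar h(x_0)$, the regime $\bar h/2 < t_0$ is subdivided further. When $\bar h/2 < t_0 < 2\bar h$, the height $t_0$ is comparable to $\bar h$, so $|S(x_0,t_0)| \geq |S(x_0,\bar h/2)| \geq C_0\bar h^{n/2}\geq C_1 t_0^{n/2}$ by the interior estimate alone. When $t_0 \geq 2\bar h$, one normalizes so that the maximal interior section touches $\partial\Omega$ at $0$ with $\phi(0)=\nabla\phi(0)=0$; then $S(x_0,t_0)=\{\phi < a x_n + t_0 - \bar h\}$ with $a>0$, and since $x_n \geq 0$ on $\Omega$ one has the clean inclusion $S(x_0,t_0)\supset \{\phi< t_0 - \bar h\} \supset S_\phi(0, t_0/2)$. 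The Localization Theorem applied to the \emph{boundary} section $S_\phi(0,t_0/2)$ then gives the lower bound. This explicit containment of a genuine boundary section is the missing ingredient your plan needs; without it, the lower bound in case (b) is not established.
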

By exploiting  the geometric properties of boundary sections, we obtain the following   covering lemma of Besicovitch type.

\begin{lemma}\label{Modified-Besicovitch} Assume that the convex domain $\Omega$ and the convex function $\phi$ satisfy \eqref{global-tang-int}--\eqref{global-sep}.
 Let $A\subset \overline{\Omega}$ and suppose that for each $x\in A\,$ a section $S(x,t)$ is given such that $t$ is bounded by a fixed number $M$. If we denote by $\calF$ the family of all these sections, then there exists a countable 
subfamily of $\calF$, $\{S(x_k,t_k)\}_{k=1}^\infty$, with the following properties:
\begin{itemize}
\item[(i)] $A \subset \bigcup_{k=1}^\infty{S(x_k, t_k)}$;
\item[(ii)] $x_k\not\in \cup_{j<k}{S(x_j, t_j)}\quad \forall k\geq 2$;
\item[(iii)] The family $\{S(x_k,\frac{t_k}{\alpha})\}_{k=1}^\infty$ is disjoint, where $\alpha=2 \theta_{\ast}^2$, and $\theta_{\ast}$ is the engulfing constant in Theorem~\ref{engulfing2};
\item[(iv)] There exists a constant $K>0$ depending only on $\rho, \lambda, \Lambda$ and $n$  such that
\[
\sum_{k=1}^\infty{\chi_{S(x_k, (1-\e)t_k)}(x)}\leq K \log{\frac{1}{\e}}\quad \mbox{for all}\quad 0<\e<1. 
\]
\end{itemize}
\end{lemma}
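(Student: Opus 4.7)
The plan is to build the subfamily by a greedy selection and then verify each item in turn. Set $\alpha := 2\theta_{\ast}^{2}$. Let $M_{0} := \sup\{\,t : S(x,t)\in\calF,\ x\in A\,\}\le M$, and pick $S(x_{1},t_{1})\in\calF$ with $t_{1}>M_{0}/2$. Inductively, once $S(x_{1},t_{1}),\dots,S(x_{k},t_{k})$ have been chosen, set $A_{k}:=A\setminus\bigcup_{j\le k}S(x_{j},t_{j})$, $M_{k}:=\sup\{\,t : S(x,t)\in\calF,\ x\in A_{k}\,\}$, and pick $x_{k+1}\in A_{k}$ with $t_{k+1}>M_{k}/2$. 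Property (ii) holds by construction, and the monotonicity $M_{k}\le M_{k-1}$ yields the fundamental height comparison $t_{k} < 2 t_{j}$ whenever $k > j$, since $t_{k}\le M_{k-1}\le M_{j-1}<2 t_{j}$.

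Properties (i) and (iii) follow next. For (iii), if $j<k$ then (ii) gives $x_{k}\notin S(x_{j},t_{j})$, so Proposition~\ref{separating} implies $S(x_{j},t_{j}/\theta_{\ast}^{2})\cap S(x_{k},t_{j}/\theta_{\ast}^{2})=\emptyset$; since $t_{k}<2 t_{j}$, both $t_{j}/\alpha$ and $t_{k}/\alpha$ are below $t_{j}/\theta_{\ast}^{2}$, so the $\alpha$-shrunk sections $S(x_{j},t_{j}/\alpha)$ and $S(x_{k},t_{k}/\alpha)$ embed in this disjoint pair. For (i), any uncovered $y\in A$ lies in every $A_{k}$; its defining section $S(y,t)\in\calF$ then forces $M_{k}\ge t$ and so $t_{k+1}>t/2$ for all $k$. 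The disjoint family $\{S(x_{k},t_{k}/\alpha)\}$ sits inside $\overline{\Omega}\subset B_{1/\rho}$, and the lower volume bound in Corollary~\ref{volume_growth} (applied, after a standard reduction, in the height regime where it is valid) contradicts $|\overline{\Omega}|<\infty$.

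The main obstacle is (iv). Fix $x$ and let $J:=\{\,k\ge 1 : x\in S(x_{k},(1-\e)t_{k})\,\}$; the target is $|J|\le K\log(1/\e)$. I will decompose $J$ dyadically in the height parameter by setting $J_{\ell}:=\{\,k\in J : M/2^{\ell+1}<t_{k}\le M/2^{\ell}\,\}$ for $\ell\ge 0$. For each fixed $\ell$, the engulfing property (Theorem~\ref{engulfing2}) gives
\[
S(x_{k},t_{k}/\alpha)\;\subset\;S(x_{k},t_{k})\;\subset\;S(x,\theta_{\ast}t_{k})\;\subset\;S(x,2\theta_{\ast}M/2^{\ell})\qquad\text{for every }k\in J_{\ell},
\]
and property (iii) keeps the leftmost sections pairwise disjoint. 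Matching the lower bound $|S(x_{k},t_{k}/\alpha)|\ge C_{1}(M/(2^{\ell+1}\alpha))^{n/2}$ against the upper bound $|S(x,2\theta_{\ast}M/2^{\ell})|\le C_{2}(2\theta_{\ast}M/2^{\ell})^{n/2}$ from Corollary~\ref{volume_growth} yields a uniform per-scale bound $|J_{\ell}|\le N_{0}$, with $N_{0}$ universal.

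What remains, and this is the delicate point, is to show that only $O(\log(1/\e))$ scales $J_{\ell}$ are non-empty. The plan is to pick $k^{\ast}\in J$ of maximal height and to extract a quantitative lower bound of the form $t_{k}\ge c\,\e^{p}\,t_{k^{\ast}}$, for universal $c,p>0$, valid for all $k\in J$. The mechanism is convex-analytic: the tangent affine functions at $x_{k^{\ast}}$ and $x_{k}$ differ by less than $(1-\e)t_{k^{\ast}}$ at $x$ (since $x$ lies in both $(1-\e)$-shrunk sections), but by at least $t_{k^{\ast}}$ at one of the two centers (from the non-inclusion property (ii) and the height comparison $t_{k}<2t_{k^\ast}$ or its reverse), forcing a gradient jump between $\nabla\phi(x_{k})$ and $\nabla\phi(x_{k^{\ast}})$ that must be absorbed in the distance $|x_{k}-x|$. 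This distance is in turn controlled by $\sqrt{t_{k}}$ via the volume bounds of Corollary~\ref{volume_growth} and the quadratic-separation assumption~\eqref{global-sep}. Once such a scale lower bound is in hand, the number of non-empty $J_{\ell}$ is at most a constant multiple of $\log(1/\e)$, and multiplying by the per-scale count $N_{0}$ delivers (iv).
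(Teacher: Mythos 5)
Your greedy construction (one sweep across all heights, choosing $t_{k+1}>M_k/2$ at each step) differs from the paper's, which first partitions $\calF$ into dyadic generations $\calF_k$ and runs the greedy selection generation by generation; but the resulting family has the same key invariants --- $x_k\notin\bigcup_{j<k}S(x_j,t_j)$, and $t_k<2t_j$ for $k>j$ --- so properties (i)--(iii) and the per-scale bound $|J_\ell|\le N_0$ can be derived essentially as in the paper's Claims~1--3 (modulo the reduction to heights below $c_0$, which you do need to invoke also when bounding $|J_\ell|$, not only in the finiteness argument for (i)). Up to here the two routes are equivalent.

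The genuine gap is in the last step of (iv): showing that the non-empty scales $J_\ell$ span a window of width $O(\log(1/\e))$. Your convex-analytic starting point is correct: with $\ell_j(y):=\phi(x_j)+\nabla\phi(x_j)\cdot(y-x_j)$, the non-inclusion $x_k\notin S(x_j,t_j)$ together with $x\in S(x_j,(1-\e)t_j)$ gives, upon subtracting the two inequalities,
\begin{equation*}
\big(\nabla\phi(x_k)-\nabla\phi(x_j)\big)\cdot(x_k-x)\;>\;\e\,t_j.
\end{equation*}
But the assertion that the left-hand side is $\lesssim t_j^{1/2}t_k^{1/2}$ --- which is what ``$|x_k-x|$ is controlled by $\sqrt{t_k}$ via the volume bounds of Corollary~\ref{volume_growth} and the quadratic-separation assumption~\eqref{global-sep}'' would have to deliver --- is simply not available in the original coordinates, and this is where the argument breaks. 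Sections $S(x,t)$ are comparable to ellipsoids whose eccentricity is uncontrolled; the volume estimate $|S(x,t)|\sim t^{n/2}$ says nothing about the diameter of $S(x,t)$; the global $C^\alpha$ bound on $\nabla\phi$ from Lemma~\ref{global-gradient} gives only $|\nabla\phi(x_k)-\nabla\phi(x_j)|\lesssim|x_k-x_j|^\alpha$, not a power of $t_j$; and \eqref{global-sep} is a boundary-only hypothesis that controls nothing at interior centers. So neither factor on the left is bounded in terms of the heights.

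What the paper does to close this --- and what seems unavoidable --- is to renormalize. Having reduced, via Proposition~\ref{dicho}, to either a genuinely interior configuration or one sitting inside a boundary section of comparable height, one rescales with $T=h^{-1/2}A_h$ (or by the affine map normalizing the interior section) and reruns your identity for the rescaled function $\phi_h$. In the normalized picture two quantitative inputs become available that are invisible in your sketch: Lemma~\ref{sep-lem}(b) (resp.\ the interior estimates) controls $|\nabla\phi_h|$ by a universal constant on the unit section, and \cite[Theorem~3.3.8]{G} together with the Localization Theorem shows that the image of the smaller section has diameter bounded by a universal power $(t_k/t_j)^{\e_1}$ of the height ratio. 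Combined, these yield $\e\lesssim(t_k/t_j)^{\e_1}$, hence $e_k-e_j\le C\log(1/\e)$. The case split into interior, tangent, and boundary configurations in the paper is an artefact of this renormalization. Without it your sketch does not close, and this renormalization step is precisely the heart of the lemma.
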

Our next result is the following covering theorem.
\begin{theorem}\label{thm:covering}
Assume that the convex domain $\Omega$ and the convex function $\phi$ satisfy \eqref{global-tang-int}--\eqref{global-sep}.
Let $\calO\subset\Omega$ open and $\e>0$ small. Suppose that for each $x\in \calO$ a section $S(x,t_x)$ is given with 
\[
\frac{|S(x,t_x) \cap \calO|}{|S(x,t_x)|} = \e.
\]
Then if  $\sup{\{t_x: x\in\calO \}}<+\infty$,  there exists a countable subfamily of sections $\{S(x_k,t_k)\}_{k=1}^\infty$ satisfying
\begin{itemize}
\item[(i)] $\calO \subset \bigcup_{k=1}^\infty{S(x_k, t_k)}$.
\item[(ii)]  $|\calO|\leq \sqrt{\e} \, \big| \bigcup_{k=1}^\infty{S(x_k, t_k)}\big|$.
\end{itemize}
\end{theorem}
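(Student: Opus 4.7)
The proof will be a Calder\'on-Zygmund-type extraction, combining Lemma \ref{Modified-Besicovitch} with the volume estimates of Corollary \ref{volume_growth}. Since the heights $\{t_x\}$ are uniformly bounded by hypothesis, I apply the Besicovitch lemma with $A = \calO$ to the family $\{S(x, t_x) : x \in \calO\}$ to extract a countable subfamily $\{S(x_k, t_k)\}_{k \ge 1}$ satisfying properties (i)--(iv). Property (i) of the lemma is precisely conclusion (i) of the theorem; only the measure bound (ii) remains.

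Set $E := \bigcup_{k \ge 1} S(x_k, t_k)$. The density hypothesis $|\calO \cap S(x_k, t_k)| = \e |S(x_k, t_k)|$ gives
\[
|\calO| \;=\; |\calO \cap E| \;\le\; \sum_{k \ge 1} |\calO \cap S(x_k, t_k)| \;=\; \e \sum_{k \ge 1} |S(x_k, t_k)|.
\]
The crux is to bound $\sum_{k \ge 1} |S(x_k, t_k)| \le C |E|$ for a universal $C$. I would use the disjointness of the shrunk sections $\{S(x_k, t_k/\alpha)\}_{k \ge 1}$ from property (iii) together with the two-sided bound $C_1 t^{n/2} \le |S_\phi(x, t)| \le C_2 t^{n/2}$ of Corollary \ref{volume_growth}, which yields a universal ratio bound $|S(x_k, t_k)| \le C |S(x_k, t_k/\alpha)|$. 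Combining disjointness with $S(x_k, t_k/\alpha) \subset S(x_k, t_k) \subset E$ then gives
\[
\sum_{k \ge 1} |S(x_k, t_k)| \;\le\; C \sum_{k \ge 1} |S(x_k, t_k/\alpha)| \;=\; C \Bigl|\bigcup_{k \ge 1} S(x_k, t_k/\alpha)\Bigr| \;\le\; C |E|.
\]
Combining the two displays, $|\calO| \le C \e |E|$; for $\e$ small enough that $C \sqrt{\e} \le 1$---the smallness implicit in the theorem---this yields $|\calO| \le \sqrt{\e} |E|$, which is (ii).

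The main technical point will be justifying the volume comparison $|S(x_k, t_k)| \le C |S(x_k, t_k/\alpha)|$ uniformly in $k$, since Corollary \ref{volume_growth} only provides the two-sided bound for $t \le c_0$. For sections of larger height this can be handled by invoking the dichotomy of Proposition \ref{dicho} to dominate each such section by a boundary section of comparable height, for which analogous two-sided volume estimates are available; alternatively one uses the uniform upper bound $t_k \le M := \sup_x t_x$ to derive the ratio directly with a constant depending only on $M$ and the universal data.
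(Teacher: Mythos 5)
Your argument is correct, but it takes a genuinely different route from the paper's. The paper never uses the disjointness property (iii) of Lemma \ref{Modified-Besicovitch} in this proof; instead it fixes a parameter $\mu\in(0,1/2)$, applies the lemma and its logarithmic bounded-overlap property (iv) to the shrunk sections $S(x_k,(1-\mu)t_k)$, compares $|S(x_k,t_k)|$ with $|S(x_k,(1-\mu)t_k)|$ via the doubling property of Lemma~\ref{doubling-engulfing}~(ii), and then optimizes $\mu$ (choosing $\log\frac{1}{\mu}\sim \e^{-1/2}$) to convert the resulting bound $|\calO|\le CK\,\e\log\frac{1}{\mu}\,\big|\bigcup_k S(x_k,t_k)\big|$ into the stated $\sqrt{\e}$ estimate. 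Your route --- pairwise disjointness of $\{S(x_k,t_k/\alpha)\}$ inside $E=\bigcup_k S(x_k,t_k)$ plus a uniform ratio bound $|S(x_k,t_k)|\le C\,|S(x_k,t_k/\alpha)|$ --- is more elementary, bypasses the overlapping-function computation and the parameter optimization, and actually yields the stronger conclusion $|\calO|\le C\e\,|E|$, from which (ii) follows once $C\sqrt{\e}\le 1$; the paper's method, by contrast, is the one that really needs the delicate $\log\frac{1}{\e}$ overlap estimate.

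One caveat on your ``main technical point'': do not resolve it with a constant depending on $M=\sup_x t_x$, since then the required smallness of $\e$ would depend on $M$, which is not among the universal parameters, and the statement obtained would be weaker than the theorem. Also, the dichotomy fix as phrased does not directly cover heights above the universal threshold, because the boundary volume growth from the Localization Theorem likewise requires small height. The clean repair is already available in the paper: Lemma~\ref{doubling-engulfing}~(ii) gives $|S(x,2t)|\le C|S(x,t)|$ for \emph{all} $t>0$ with universal $C$ (for $2t>c_0$ one uses $|S(x,2t)|\le|\Omega|\le|B_{1/\rho}|\le C|S(x,c_0/2)|\le C|S(x,t)|$), and iterating this a universal number of times (recall $\alpha=2\theta_{\ast}^2$ is universal) gives $|S(x_k,t_k)|\le C\,|S(x_k,t_k/\alpha)|$ uniformly in $k$, with no dependence on $M$.
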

As an application of the covering lemma, we have the following global strong-type $p-p$ estimate for the maximal function with respect to sections. 
\begin{theorem}\label{strongtype}
Assume that the convex domain $\Omega$ and the convex function $\phi$ satisfy \eqref{global-tang-int}--\eqref{global-sep}.
For $f\in L^1(\Omega)$,  define
\begin{equation*}\label{maximalfunction}
\mathcal M (f)(x) :=\sup_{t>0}
\dfrac{1}{|S_\phi(x,t)|}\int_{S_\phi(x,t)}|f(y)|\, dy\quad \forall x\in \Omega.
\end{equation*}
Then we have
\begin{enumerate}
\item[(i)]
There exists a constant $C$ depending only on $\rho,  \lambda, \Lambda$ and $n$
such that
\begin{equation*}\label{weaktype1-1}
\big|\{x\in \Omega :\mathcal M (f)(x)>\beta\}\big|\leq
\dfrac{C}{\beta} \, \int_{\Omega}|f(y)|\, dy\quad \forall \beta>0.
\end{equation*}
\item[(ii)]
For any $1<p<\infty$, there exists  $C$ depending only on $p, \rho,  \lambda, \Lambda$ and $n$ such that
\begin{equation*}\label{strongtypep-p}
\left(\int_{\Omega}{|\mathcal M (f)(x)|^p \,dx} \right)^{\frac{1}{p}}\leq
C\, \left(\int_{\Omega}{|f(y)|^p \,dy} \right)^{\frac{1}{p}}.
\end{equation*}
\end{enumerate}
\end{theorem}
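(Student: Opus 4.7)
The plan is to establish part~(i), the weak-type $(1,1)$ estimate, by a Calder\'on--Zygmund style argument that combines the Besicovitch-type covering Lemma~\ref{Modified-Besicovitch} with the volume doubling implicit in Corollary~\ref{volume_growth}, and then to obtain part~(ii) by Marcinkiewicz interpolation against the obvious bound $\|\mathcal{M}(f)\|_\infty \le \|f\|_\infty$.

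For (i), fix $\beta>0$ and set $E_\beta=\{x\in\Omega:\mathcal{M}(f)(x)>\beta\}$. For each $x\in E_\beta$, by definition of the supremum one can choose $t_x>0$ with $\beta|S(x,t_x)|<\int_{S(x,t_x)}|f|$. The lower bound $|S(x,t)|\ge C_1 t^{n/2}$ for $t\le c_0$ from Corollary~\ref{volume_growth}, combined with $|S(x,t_x)|<\|f\|_1/\beta$, forces $t_x$ to lie below a universal multiple of $(\|f\|_1/\beta)^{2/n}$ in the nontrivial range $\|f\|_1/\beta<C_1 c_0^{n/2}$; for smaller $\beta$ the estimate follows trivially from $|E_\beta|\le|\Omega|$.

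The main obstacle is that Lemma~\ref{Modified-Besicovitch} yields disjointness only at the contracted scale $t_k/\alpha$, while the defining inequality for $\mathcal{M}$ lives on the full sections $S(x_k,t_k)$. I plan to resolve this by combining the overlap bound~(iv) with doubling. Fix a small $\e\in(0,1)$ (say $\e=1/2$), set $\tilde t_x:=t_x/(1-\e)$ so that $(1-\e)\tilde t_x=t_x$, and use the doubling inequality $|S(x,\tilde t_x)|\le C_\e|S(x,t_x)|$ (immediate from the two-sided power bound in Corollary~\ref{volume_growth}) to rewrite
\[
\beta'\,|S(x,\tilde t_x)| < \int_{S(x,(1-\e)\tilde t_x)}|f|,\qquad \beta':=\beta/C_\e.
\]
Then apply Lemma~\ref{Modified-Besicovitch} to $\{S(x,\tilde t_x)\}_{x\in E_\beta}$ (whose heights are uniformly bounded by the previous step) to extract a countable subfamily $\{S(x_k,\tilde t_k)\}$ covering $E_\beta$ and, by property~(iv) at the chosen $\e$, satisfying $\sum_k\chi_{S(x_k,(1-\e)\tilde t_k)}\le K\log(1/\e)$. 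Summing the pointwise inequality over $k$ and integrating,
\[
\beta'\,|E_\beta|\le \beta'\sum_k|S(x_k,\tilde t_k)| < \int_\Omega|f|\sum_k\chi_{S(x_k,(1-\e)\tilde t_k)}\le K\log(1/\e)\,\|f\|_1,
\]
which yields (i) with universal constant $C=C_\e K\log(1/\e)$.

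For part~(ii), $\mathcal{M}$ is clearly sublinear and trivially satisfies $\|\mathcal{M}(f)\|_\infty\le\|f\|_\infty$. The Marcinkiewicz interpolation theorem applied between this strong $(\infty,\infty)$ bound and the weak $(1,1)$ bound from (i) then delivers the strong $(p,p)$ estimate for every $1<p<\infty$, with constant depending only on $p$ and the universal weak-type $(1,1)$ constant.
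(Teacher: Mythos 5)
Your proposal is correct and follows essentially the same route as the paper: select a section $S(x,t_x)$ at each point of the superlevel set on which the average of $|f|$ exceeds $\beta$, enlarge the heights by a fixed factor, apply the Besicovitch-type covering Lemma~\ref{Modified-Besicovitch}, use the finite-overlap bound~(iv) together with doubling of section volumes to close the weak-type $(1,1)$ estimate, and then interpolate via Marcinkiewicz against the trivial $L^\infty$ bound. The only (cosmetic) deviation is how you ensure the heights $t_x$ are uniformly bounded: the paper observes via Lemma~\ref{global-gradient} that $S_\phi(x,t)=\overline\Omega$ for all $t\geq M$ and so restricts the supremum to $t\leq M$ outright, whereas you infer an upper bound on $t_x$ from the volume lower bound in Corollary~\ref{volume_growth} together with $|S(x,t_x)|<\|f\|_1/\beta$; both are valid, but the paper's shortcut gives a single universal cap $M$ independent of $\beta$ and $f$ and is cleaner.
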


Finally, we obtain that the function 
 $d: \overline{\Omega}\times \overline{\Omega}\longrightarrow [0,\infty)$, defined by
\begin{equation*}
d(x,y) := \inf{\big\{r>0: x\in S_\phi(y,r)\mbox{ and } y\in S_\phi(x,r)\big\}} \quad \forall x,y\in\overline{\Omega},
\end{equation*}
is a quasi-distance on $\overline{\Omega}$. Moreover,
$(\overline{\Omega}, d, 
|\cdot|)$ is a space of homogeneous type where $|\cdot|$ denotes the $n$-dimensional Lebesgue measure  restricted to $\overline{\Omega}$. The precise statements of these are given in Section~\ref{sec:quasi-distance}.

\section{Sections of the Monge-Amp\`ere equation and the Localization theorem}
\label{data_sec}
In this section, we recall the main tool to study geometric properties of boundary sections of solutions to the Monge-Amp\`ere equation: the Localization theorem at the boundary for 
solutions to the Monge-Amp\`ere equation (Theorem~\ref{main_loc}). Properties of solutions under suitable rescalings and global regularity for gradient will also be discussed. Throughout this section, we assume that 
the convex domain $\Omega$ and the convex function $\phi$ satisfy \eqref{global-tang-int}--\eqref{global-sep}.
\subsection{The Localization Theorem} We now focus on sections centered at a  point on the boundary $\p\Omega$ and describe their geometry. Assume this boundary point to be $0$ and by
 \eqref{global-tang-int}, we can also assume that
\begin{equation}\label{om_ass}
B_\rho(\rho e_n) \subset \, \Omega \, \subset \{x_n \geq 0\} \cap B_{\frac 1\rho},
\end{equation}
where $\rho>0$ is the constant given by condition \eqref{global-tang-int}. 
After subtracting a linear function, we can assume further that
\begin{equation}\label{0grad}
\phi(0)=0, \quad \nabla \phi(0)=0.
\end{equation}
Let us  denote
$$S(h):= S_{\phi}(0, h).$$
If the boundary data has quadratic growth near the hyperplane $\{x_n=0\}$ then, as $h \rightarrow 0$, $S(h)$ is equivalent to a half-ellipsoid centered at 0. This is the content
of the Localization Theorem proved by Savin in \cite{S1,S2}. Precisely, this theorem reads as follows.

\begin{theorem}[Localization Theorem \cite{S1,S2}]\label{main_loc}
 Assume that $\Omega$ satisfies \eqref{om_ass} and $\phi$ satisfies 
\eqref{eq_u},\eqref{0grad}, and
\begin{equation*}\label{commentstar}\rho |x|^2 \leq \phi(x) \leq \rho^{-1} 
|x|^2 \quad \text{on $\p \Omega \cap \{x_n \leq \rho\}.$}
\end{equation*}
Then, for each $h\leq k$ there exists an ellipsoid $E_h$ of volume $\omega_{n}h^{n/2}$ 
such that
\[
 kE_h \cap \overline \Omega \, \subset \, S(h) \, \subset \, k^{-1}E_h \cap \overline \Omega.
\]
Moreover, the ellipsoid $E_h$ is obtained from the ball of radius $h^{1/2}$ by a
linear transformation $A_h^{-1}$ (sliding along the $x_n=0$ plane)
$$A_hE_h= h^{1/2}B_1,\quad \det A_{h} =1,$$
$$A_h(x) = x - \tau_h x_n, \quad \tau_h = (\tau_1, \tau_2, \ldots, 
\tau_{n-1}, 0), $$
with
$ |\tau_{h}| \leq k^{-1} |\log h|$.
The constant $k$ above depends only on $\rho, \lambda, \Lambda, n$.
\end{theorem}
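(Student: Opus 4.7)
The statement to be established is Savin's Localization Theorem, which is the cornerstone of all boundary results in the excerpt. Since this is a deep result rather than a routine consequence of earlier material, the plan is to follow the strategy of Savin and build up the ellipsoid $E_{h}$ in three stages, using only the quadratic boundary separation and the pinching $\lambda\le \det D^{2}\phi\le \Lambda$.

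First I would extract a crude description of $S(h)$ directly from the hypotheses. By the Aleksandrov maximum principle applied in the strip $\{x_{n}\le \rho\}$ and the assumption $\rho|x|^{2}\le \phi(x)\le \rho^{-1}|x|^{2}$ on $\partial\Omega\cap\{x_{n}\le \rho\}$, one shows that for small $h$ the section is trapped in a slab $\{0\le x_{n}\le C h^{1/2}|\log h|\}$ and contains a small half-ball of radius $c h^{1/2}$ around $(0,\dots,0,c h^{1/2})$. The lower volume estimate $|S(h)|\ge c h^{n/2}$ follows from the bound $\det D^{2}\phi\le \Lambda$ via the Aleksandrov estimate, and the matching upper estimate $|S(h)|\le C h^{n/2}$ follows from $\det D^{2}\phi\ge \lambda$ by the same reasoning; both use only convexity and the location of $0$ on $\partial\Omega$.

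Next I would apply John's Lemma to produce a centered ellipsoid $\widetilde E_{h}$ with $c\widetilde E_{h}\subset S(h)\subset C\widetilde E_{h}$ and $|\widetilde E_{h}|\sim h^{n/2}$. Because $0\in\partial\Omega$ and the quadratic growth in the $e_{n}$ direction is built into the boundary data, one of the principal axes of $\widetilde E_{h}$ has length of order $h^{1/2}$ pointing essentially in the $e_{n}$ direction; the remaining axes lie almost in the tangent hyperplane $\{x_{n}=0\}$. Normalising by an affine map $A_{h}$ that fixes $\{x_{n}=0\}$, has $\det A_{h}=1$, and sends $\widetilde E_{h}$ to $h^{1/2}B_{1}$ forces $A_{h}$ to have the specific form $A_{h}x=x-\tau_{h}x_{n}$ with $\tau_{h}\in \{x_{n}=0\}$.

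The crux, and the place I expect the main obstacle, is the logarithmic bound $|\tau_{h}|\le k^{-1}|\log h|$. The plan is a dyadic iteration $h\mapsto h/2$: rescaling $S(h)$ by $x\mapsto h^{-1/2}A_{h}x$ yields a normalized convex function on a set comparable to $B_{1}^{+}$ that again satisfies a Monge--Amp\`ere equation with bounded right hand side and quadratic boundary data on the flat part. A compactness argument, combined with the $C^{1,\alpha}$ boundary regularity for the normalized problem, shows that the renormalizing matrix needed at the next scale is close to the identity, with error bounded by an absolute constant $c_{\ast}$ independent of the scale. The issue is that these errors are added multiplicatively to $A_{h}$, so a naive summation of $k$ dyadic steps gives $|\tau_{h}|\le C k$; since $k\sim |\log h|$, this yields precisely the claimed logarithmic bound. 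Making the compactness argument quantitative enough that the per-step error is truly absolute (and does not degrade as $h\to 0$) is the delicate point, and it is there that the quadratic separation \eqref{global-sep} is used in an essential way. Once $|\tau_{h}|\le k^{-1}|\log h|$ is in hand, the inclusions $k E_{h}\cap\overline\Omega\subset S(h)\subset k^{-1}E_{h}\cap\overline\Omega$ follow by feeding the bound on $\tau_{h}$ back into the John-ellipsoid trapping from the first two steps.
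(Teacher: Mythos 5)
The paper does not prove Theorem~\ref{main_loc} at all: it is quoted verbatim from Savin's work and cited as \cite{S1,S2}, then used as a black box for the rest of the paper. There is therefore no ``paper proof'' to compare your proposal against. What you have written is an outline of Savin's own argument, and as such it captures the main structural ideas correctly (Aleksandrov-type slab trapping and volume pinching, John's lemma to produce a candidate ellipsoid, a scale-by-scale renormalization with the ``sliding'' matrices $A_h(x)=x-\tau_h x_n$, and the observation that since such slidings compose additively in $\tau$, the per-step error accumulates linearly over the $\sim|\log h|$ dyadic scales, giving the stated logarithmic bound).

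That said, two points in your sketch would require substantially more work to close. First, the assertion that one principal axis of the John ellipsoid of $S(h)$ has length $\sim h^{1/2}$ in the $e_n$ direction and the others lie nearly in $\{x_n=0\}$ does not follow from quadratic boundary data alone; it has to be extracted from the iterative normalization (indeed, getting this right at each scale is part of what the compactness/perturbation step delivers, not an input to it). Second, your recognition that ``making the compactness argument quantitative enough that the per-step error is truly absolute'' is the crux is correct, and it is precisely the hard part of Savin's proof: it requires a boundary Pogorelov-type estimate for the renormalized problem and a careful comparison with the two-dimensional model $\phi_0=x_n(1+\sum_i \sigma_i x_i)+\frac12\sum_i \mu_i x_i^2$ to show that, after one dyadic step, the new sliding parameter changes by $O(1)$. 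Your proposal flags this as delicate but does not supply it, so as written it is a road map to Savin's proof rather than a proof. For the purposes of this paper, however, that is immaterial, since the authors take the theorem as given.
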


 The ellipsoid $E_h$, or equivalently the linear map $A_h$, 
provides useful information about the behavior of $\phi$ 
near the origin. From Theorem \ref{main_loc} we also control the shape of sections that are tangent to $\p \Omega$ at the origin. 

\begin{proposition}\label{tan_sec}
Let $\phi$ and $\Omega$ satisfy the hypotheses of the Localization Theorem \ref{main_loc} at the 
origin. Assume that for some $y \in \Omega$ the section $S_{\phi}(y, h) \subset \Omega$
is tangent to $\p \Omega$ at $0$ for some $h \le c$ with $c$ universal, that is, $\p S_{\phi}(y, h)\cap \p\Omega =\{0\}$. Then there exists a small 
 positive constant $k_0<k$ depending 
only on $\rho $, $\lambda$, $\Lambda$  and $n$ such that
$$ \nabla \phi(y)=a e_n 
\quad \mbox{for some} \quad   a \in [k_0 h^{1/2}, k_0^{-1} h^{1/2}],$$
$$k_0 E_h \subset S_{\phi}(y, h) -y\subset k_0^{-1} E_h, \quad \quad k_0 h^{1/2} \le dist(y,\p \Omega) \le k_0^{-1} h^{1/2}, \quad $$
with $E_h$ and $k$ the ellipsoid and constant defined in the Localization Theorem \ref{main_loc}.
\end{proposition}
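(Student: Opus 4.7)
The plan is to combine a gradient-direction analysis—forced by the tangency condition at $0$—with a normalization of the tangent section via the affine map of the Localization Theorem.

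First I would prove $\nabla\phi(y) = a\, e_n$ with $a>0$. Since $0 \in \p S_\phi(y,h)$, evaluating the defining affine function at $0$ yields the tangency identity
\[
 \nabla\phi(y)\cdot y - \phi(y) = h.
\]
Next, the assumption $\p S_\phi(y,h)\cap\p\Omega=\{0\}$ forces $\phi(x)\ge \nabla\phi(y)\cdot x$ for $x\in\p\Omega$ in a neighborhood of $0$. Writing $\nabla\phi(y)=(b',a)\in \R^{n-1}\times\R$ and parametrizing $\p\Omega$ locally as the graph $x_n=f(x')$ with $f(x')=O(|x'|^2)$ (a consequence of the interior-ball condition \eqref{om_ass}), the inequality, combined with the upper bound $\phi(x)\le \rho^{-1}|x|^2$ on $\p\Omega\cap\{x_n\le\rho\}$, becomes
\[
 b'\cdot x'+a\,f(x')\le \rho^{-1}\bigl(|x'|^2+f(x')^2\bigr).
\]
Testing at $x'=\e\, b'/|b'|$ with $\e\downarrow 0$ forces $b'=0$. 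Positivity of $a$ is then immediate from the tangency identity together with $\phi(y)\ge 0$ (by convexity, since $\phi(0)=0=\nabla\phi(0)$) and $y_n>0$.

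Next, I would normalize via $T:=h^{-1/2}A_h$, where $A_h$ is the volume-preserving sliding map of Theorem~\ref{main_loc}. Setting $\tilde\phi(z):=\phi(T^{-1}z)/h$ and $\tilde\Omega:=T\Omega$, the function $\tilde\phi$ solves Monge-Amp\`ere on $\tilde\Omega$ with the same bounds $\lambda\le \det D^2\tilde\phi\le \Lambda$ (since $\det A_h=1$), satisfies $\tilde\phi(0)=\nabla\tilde\phi(0)=0$, and the Localization Theorem translates to
\[
 k B_1\cap\overline{\tilde\Omega}\ \subset\ S_{\tilde\phi}(0,1)=T(S(h))\ \subset\ k^{-1}B_1\cap\overline{\tilde\Omega}.
\]
Because $A_h$ slides along $\{x_n=0\}$, the $e_n$-axis is preserved by $T$, so the rescaled tangent section is $S_{\tilde\phi}(\tilde y,1)$ with $\tilde y=Ty$, still tangent to $\p\tilde\Omega$ at $0$, with $\nabla\tilde\phi(\tilde y)=\tilde a\, e_n$ and $a=h^{1/2}\tilde a$.

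It then suffices to show, in the normalized setting, that $\tilde a\asymp 1$, $\tilde y_n\asymp 1$, and $S_{\tilde\phi}(\tilde y,1)$ is comparable to $B_1$; undoing the rescaling through $T^{-1}$ gives the three stated bounds. The rescaled tangency identity combined with the defining inequality of the section yields the elementary pointwise bound $\tilde\phi(z)<\tilde a\, z_n$ for every $z\in S_{\tilde\phi}(\tilde y,1)$; together with $z_n \le \diam\tilde\Omega\lesssim 1$ this produces the containment
\[
 S_{\tilde\phi}(\tilde y,1)\ \subset\ S_{\tilde\phi}(0,C\tilde a).
\]
Applying interior volume estimates (Caffarelli) to the compactly-contained sub-sections $S_{\tilde\phi}(\tilde y,1-\delta)$ and letting $\delta\downarrow 0$ forces $|S_{\tilde\phi}(\tilde y,1)|\asymp 1$, and comparing with the Localization-Theorem-controlled boundary volume $|S_{\tilde\phi}(0,t)|\lesssim t^{n/2}$ yields $\tilde a\gtrsim 1$. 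An upper bound on $\tilde a$ follows from the tangency identity together with the global bounds on $\tilde y$ and on $\tilde\phi$. The identity then pins down $\tilde y_n\asymp 1$, and the two-sided volume bound combined with convexity and bounded Monge-Amp\`ere data upgrades to the shape comparison $S_{\tilde\phi}(\tilde y,1)\approx B_1$.

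The main obstacle is the last step of the rescaled analysis: combining the inclusion in the controlled boundary section $S_{\tilde\phi}(0,C\tilde a)$ with the interior volume estimates and convexity to produce both the two-sided bound $\tilde a\asymp 1$ and the full shape comparison.
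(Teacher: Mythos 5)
The paper offers no proof of this statement; it simply cites Savin's paper \cite{S3}, where the result is obtained by a compactness argument: one rescales to unit scale through $T=h^{-1/2}A_h$ exactly as you do, but then argues by contradiction, extracting a limiting solution from a hypothetical sequence of degenerating tangent sections and contradicting the Localization Theorem in the limit. Your attempt at a direct quantitative proof is therefore a genuinely different route, and your first step --- deducing $\nabla\phi(y)=ae_n$ with $a>0$ from the tangency identity $\nabla\phi(y)\cdot y-\phi(y)=h$, the inequality $\phi\geq \nabla\phi(y)\cdot x$ on $\p\Omega$ near $0$, and the quadratic bound on $\phi|_{\p\Omega}$ --- is correct and clean. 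The rescaling setup and the derivation of the pointwise inequality $\tilde\phi(z)<\tilde a\,z_n$ on $S_{\tilde\phi}(\tilde y,1)$ are also correct.

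However, there is a genuine gap at the containment step. You assert ``$z_n\le\diam\tilde\Omega\lesssim 1$,'' but this is false: $\tilde\Omega=h^{-1/2}A_h\Omega$, and since $\Omega\subset B_{1/\rho}$ and $\|A_h\|\leq k^{-1}|\log h|$, one only gets $\diam\tilde\Omega\lesssim h^{-1/2}|\log h|$, which blows up as $h\to 0$. Consequently the inclusion $S_{\tilde\phi}(\tilde y,1)\subset S_{\tilde\phi}(0,C\tilde a)$ does not follow from $\tilde\phi(z)<\tilde a\,z_n$ alone. What is missing is precisely a uniform bound $z_n\lesssim 1$ on the tangent section, i.e.\ the statement that the section does not stretch far in the $e_n$-direction; but this is equivalent (after undoing the rescaling, using that $A_h$ preserves the $x_n$-coordinate) to the conclusion $\dist(y,\p\Omega)\lesssim h^{1/2}$ that the proposition is supposed to deliver. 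So the argument is circular at this point: the non-degeneracy of the tangent section, which is the entire content of the proposition, is being used before it is established. Once this containment were available, the remaining steps (volume lower bound via exhausting interior sections $S_{\tilde\phi}(\tilde y,1-\delta)$, comparison with the boundary volume bound to force $\tilde a\gtrsim 1$, then $\tilde y_n\asymp 1$ from the identity $\tilde a\tilde y_n=1+\tilde\phi(\tilde y)$) would plausibly close, but the hard nondegeneracy step is exactly where a compactness argument is needed and where your elementary approach does not yet produce a bound.
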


Proposition \ref{tan_sec} is a consequence of Theorem \ref{main_loc} and was proved in \cite{S3}. \\

The quadratic separation from tangent planes on the boundary for solutions to the Monge-Amp\`ere equation is a crucial assumption in the Localization Theorem~\ref{main_loc}. 
This is the case for solutions to the Monge-Amp\`ere  with the right hand side bounded away from $0$ and $\infty$ and 
smooth boundary data as proved in \cite[Proposition 3.2]{S2}. In particular, the quadratic separation property holds if
$$\phi\mid_{\p\Omega}, \p\Omega\in C^{3},~\text{and}~ \Omega~\text{is uniformly convex}.$$

\subsection{Properties of the rescaled functions}
\label{rescale-sec}

Let $\phi$ and $\Omega$ satisfy the hypotheses of the Localization Theorem \ref{main_loc} at the 
origin. We know that for all $h \le k$,
 $S(h)$ satisfies 
$$k E_h \cap \bar \Omega  \subset S(h) \subset k^{-1} E_h,$$ 
with $A_h$ being a linear transformation 
and
$$\det A_{h} = 1,\quad E_h=A^{-1}_hB_{h^{1/2}},  \quad A_hx=x-\tau_hx_n$$
$$\tau_h \cdot e_n=0, \quad \|A_h^{-1}\|, \,\|A_h\| \le k^{-1} |\log h|.$$
This gives 
\begin{equation}\label{small-sec}
 \overline \Omega \cap B^{+}_{ch^{1/2}/|\log h|}\subset S(h) 
\subset B^{+}_{C h^{1/2} |\log h|}\subset B^{+}_{C h^{1/4} }.
\end{equation}
We denote the rescaled functions by \begin{equation*}
 \phi_h(x):=\frac{\phi(h^{1/2}A^{-1}_hx)}{h}.
\end{equation*}
The function $\phi_h$ is continuous and is defined in $\overline \Omega_h$ with
 $\Omega_h:= h^{-1/2}A_h \Omega,$
 and solves the Monge-Amp\`ere equation
 $$\det D^2 \phi_h=g_h(x), \quad \quad \lambda \le g_h(x) :=g(h^{1/2}A_h^{-1}x) \le \Lambda.$$
 The section at height 1 for $\phi_h$ centered at the origin satisfies
$S_{\phi_{h}}(0, 1)=h^{-1/2}A_hS (h),$ and by the localization theorem we 
obtain $$B_k \cap \overline \Omega_h \subset S_{\phi_{h}}(0, 1) \subset B_{k^{-1}}^+.$$

Some properties of the rescaled function $\phi_h$ was established in \cite[Lemma 4.2]{LS}. For later use, we record them here.
\begin{lemma}
If $h\leq c$, then 

a) for any 
$x,x_{0}\in\p\Omega_{h}\cap B_{2/k}$ we have
\begin{equation*}
 \frac{\rho}{4}\abs{x-x_{0}}^2 \leq \phi_h(x) - \phi_h(x_{0}) -\nabla \phi_h(x_{0}) (x- x_{0})\leq 4\rho^{-1} \abs{x- x_{0}}^2, 
\label{sep-near0}
\end{equation*}

b) if $r \le c$ small, we have $$|\nabla \phi_h| \le 
C r |\log r|^2 \quad \mbox{in} \quad \overline \Omega_h \cap B_r.$$

\label{sep-lem}
\end{lemma}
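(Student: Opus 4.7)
The plan is to derive part (a) from the global quadratic separation \eqref{global-sep} via the change of variables built into the definition of $\phi_h$, and to deduce part (b) from (a) using the Localization Theorem and convexity. For (a), given $x,x_0\in\partial\Omega_h\cap B_{2/k}$, set $y:=h^{1/2}A_h^{-1}x$ and $y_0:=h^{1/2}A_h^{-1}x_0$, which both lie on $\partial\Omega$. A direct computation from $\phi_h(x)=h^{-1}\phi(h^{1/2}A_h^{-1}x)$ gives $\nabla\phi_h(x)=h^{-1/2}(A_h^{-1})^T\nabla\phi(y)$ and the identity
\[
\phi_h(x)-\phi_h(x_0)-\nabla\phi_h(x_0)\cdot(x-x_0)=\frac{1}{h}\bigl[\phi(y)-\phi(y_0)-\nabla\phi(y_0)\cdot(y-y_0)\bigr].
\]
Applying \eqref{global-sep} to $y,y_0\in\partial\Omega$ bounds the right-hand side between $\rho|y-y_0|^2/h$ and $\rho^{-1}|y-y_0|^2/h$; and since $|y-y_0|^2=h|A_h^{-1}(x-x_0)|^2$, part (a) reduces to the two-sided geometric comparison
\[
\tfrac{1}{4}|x-x_0|^2\leq |A_h^{-1}(x-x_0)|^2\leq 4|x-x_0|^2\qquad\text{for all }x,x_0\in\partial\Omega_h\cap B_{2/k}.
\]

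This comparison is the main technical obstacle. Writing $v=x-x_0$ and $A_h^{-1}v=(v'+\tau_h v_n,\,v_n)$ with $|\tau_h|\leq k^{-1}|\log h|$ possibly large, the estimate fails for arbitrary vectors $v$, and one must exploit that $v$ is a chord of $\partial\Omega_h$ near the origin. The set $\partial\Omega_h$ is tangent to $\{x_n=0\}$ at $0$ (since both $A_h$ and the dilation $h^{-1/2}$ preserve the $e_n$-axis and $\{y_n=0\}$ is the tangent plane of $\partial\Omega$ at $0$), and the interior-ball condition \eqref{global-tang-int} on $\Omega$, together with the control of $\|A_h\|$ in the Localization Theorem, forces $v_n$ to be small enough relative to $|v'|$ on $\partial\Omega_h\cap B_{2/k}$ to absorb the logarithmic growth of $|\tau_h|$, provided $h\leq c$. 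Once this shape estimate on $\partial\Omega_h$ is in hand, the two-sided comparison is a routine verification by expanding $|A_h^{-1}v|^2=|v'+\tau_h v_n|^2+v_n^2$ and applying Cauchy--Schwarz.

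For part (b), apply (a) with $x_0=0$: since $\phi(0)=0$ and $\nabla\phi(0)=0$ give $\phi_h(0)=0$ and $\nabla\phi_h(0)=0$, one immediately obtains $\phi_h(x)\leq 4\rho^{-1}|x|^2$ on $\partial\Omega_h\cap B_{2/k}$. This quadratic boundary bound, combined with the shape of the sections $S_{\phi_h}(0,s)=h^{-1/2}A_h S(sh)$ coming from the Localization Theorem applied to $\phi$ (see \eqref{small-sec}), yields $\phi_h(x)\leq Cr^2|\log r|^2$ in $\overline\Omega_h\cap B_r$ for all $r\leq c$. Finally, since $\phi_h\geq 0=\phi_h(0)$ is convex, the elementary inequality $|\nabla u(x)|\leq 2\sup_{B_{2|x|}}u/|x|$ for nonnegative convex functions vanishing at $0$ (adapted near $\partial\Omega_h$ using the boundary values already obtained) upgrades the pointwise bound on $\phi_h$ to the desired gradient estimate $|\nabla\phi_h|\leq Cr|\log r|^2$ in $\overline\Omega_h\cap B_r$.
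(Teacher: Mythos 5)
The paper itself does not prove this lemma; it records it as a citation to \cite[Lemma~4.2]{LS}, so there is no in-paper proof to compare against. On its own terms your proposal is on the right track for part~(a) but contains a genuine gap in part~(b).

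For part~(a), the scaling identity and the reduction to the two-sided comparison $\tfrac14|x-x_0|^2\leq|A_h^{-1}(x-x_0)|^2\leq4|x-x_0|^2$ for chords of $\partial\Omega_h$ are correct, and your observation that one must exploit the flatness of $\partial\Omega_h$ near $0$ is the right idea. To make it precise: write $\partial\Omega$ near $0$ as a convex graph $y_n=\gamma(y')$ with $0\leq\gamma(y')\leq|y'|^2/\rho$ from the interior ball at $0$; convexity then gives a Lipschitz bound $\mathrm{Lip}(\gamma)\lesssim R/\rho$ on $B'_R$. Transporting this through $y=h^{1/2}A_h^{-1}x$ and using $\|A_h^{-1}\|\leq k^{-1}|\log h|$ shows that for $x,x_0\in\partial\Omega_h\cap B_{2/k}$ one has $|v_n|\leq C h^{1/2}|\log h|\,|v'|$ with $v=x-x_0$, so $|\tau_h v_n|\leq Ck^{-1}h^{1/2}|\log h|^2|v'|\leq\tfrac12|v'|$ once $h\leq c$, which yields the claimed comparison. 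You did not carry this out, but the sketch is sound.

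The gap is in part~(b). You propose to obtain $\phi_h\leq Cr^2|\log r|^2$ on $\overline\Omega_h\cap B_r$ by pushing \eqref{small-sec} for $\phi$ through the map $h^{-1/2}A_h$. This does not give a $h$-independent constant: applying $h^{1/2}A_h^{-1}$ to $\overline\Omega_h\cap B_r$ produces a ball of radius up to $h^{1/2}rk^{-1}|\log h|$, and inserting this into the inclusion $\overline\Omega\cap B^+_{c(h')^{1/2}/|\log h'|}\subset S_\phi(0,h')$ forces $h'\gtrsim r^2|\log h|^2$ rather than $r^2|\log r|^2$, so the resulting bound degenerates as $h\to0$. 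The correct route (and the point of the paper's Subsection~3.2) is to observe that, thanks to part~(a), the rescaled function $\phi_h$ itself satisfies the hypotheses of the Localization Theorem~\ref{main_loc} at the origin, and then to apply \eqref{small-sec} \emph{to $\phi_h$}, whose normalizing maps $\tilde A_s$ obey $\|\tilde A_s\|,\|\tilde A_s^{-1}\|\lesssim|\log s|$ with no hidden $|\log h|$. This gives $\overline\Omega_h\cap B_r\subset S_{\phi_h}(0,Cr^2|\log r|^2)$ for $r\leq c$, and from there the gradient bound follows by convexity and the geometry of $S_{\phi_h}(0,s)$; even that last step needs more care than the interior inequality $|\nabla u(x)|\leq 2\sup_{B_{2|x|}}u/|x|$, since $x$ may be near $\partial\Omega_h$ and one must separate tangential directions (controlled by part~(a)) from the inward normal direction (controlled by the section containment), so the phrase ``adapted near $\partial\Omega_h$'' is doing more work than it appears.
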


\subsection{Global regularity}
We note that if $\phi$ satisfies \eqref{eq_u}--\eqref{global-sep}, then for any $\mathbf{v}\in \R^n$ the function $\tilde\phi(x) := \phi(x) + \mathbf{v}\cdot x$ 
also satisfies \eqref{eq_u}--\eqref{global-sep} with the same constants. As a result, under our
hypotheses the gradient of $\phi$ is not bounded by any universal constant. Nevertheless,  the oscillation 
of $\nabla\phi$ is globally bounded thanks to \cite[Proposition~2.6]{LS}. We record 
this result and its direct consequence in the next lemma.
\begin{lemma}\label{global-gradient}
 Assume that the convex domain $\Omega$ and the convex 
function $\phi$ satisfy \eqref{global-tang-int}--\eqref{global-sep}. Then there exist constants $\alpha\in (0,1)$ and $C>0$ depending only on $\rho, \lambda, \Lambda$ and $n$ such that
\[
[\nabla\phi]_{C^{\alpha}(\overline{\Omega})}\leq C.
\]
As a consequence, there is a universal constant $M>0$ satisfying 
\[
 S_\phi(x_0,M)\supset \overline{\Omega} \quad\mbox{for all } x_0\in \overline{\Omega}. 
\]
\end{lemma}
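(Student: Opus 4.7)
The first assertion, $[\nabla\phi]_{C^\alpha(\overline\Omega)}\leq C$, is essentially a citation to \cite[Proposition 2.6]{LS}, so the plan is to invoke that result directly. For a self-contained sketch, one would combine the Localization Theorem \ref{main_loc} with the rescaling machinery of Section \ref{rescale-sec}: each boundary section $S(x_0,h)$ with $x_0\in\partial\Omega$ and $h$ small is comparable, after the area-preserving affine map $A_h$, to a normalized domain on which the interior $C^{1,\alpha}$ theory of Caffarelli and the boundary separation estimate in Lemma \ref{sep-lem}(a) both apply with universal constants; scaling back, using $\det A_h=1$ and the logarithmic control $\|A_h\|\leq k^{-1}|\log h|$, yields a universal Hölder modulus for $\nabla\phi$ up to the boundary. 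Matching this with the interior Hölder estimate away from $\partial\Omega$ gives the global bound.

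Granting part (a), the consequence is obtained by a routine convexity computation. Fix arbitrary $x_0,y\in\overline\Omega$; the segment $[x_0,y]$ lies in $\overline\Omega$ by convexity. The fundamental theorem of calculus gives
\begin{equation*}
\phi(y)-\phi(x_0)-\nabla\phi(x_0)\cdot(y-x_0)=\int_0^1\bigl[\nabla\phi(x_0+s(y-x_0))-\nabla\phi(x_0)\bigr]\cdot(y-x_0)\,ds.
\end{equation*}
Applying the Hölder bound from part (a) to the integrand, and using the diameter estimate $\diam(\Omega)\leq 2/\rho$ from \eqref{global-tang-int}, we obtain
\begin{equation*}
0\leq\phi(y)-\phi(x_0)-\nabla\phi(x_0)\cdot(y-x_0)\leq \frac{C}{1+\alpha}\,|y-x_0|^{1+\alpha}\leq \frac{C}{1+\alpha}\Bigl(\frac{2}{\rho}\Bigr)^{1+\alpha}=:M_0.
\end{equation*}
Hence for any $M>M_0$ one has $y\in S_\phi(x_0,M)$ for every $x_0,y\in\overline\Omega$, which is the claim. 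The lower bound $0$ in the display, needed to justify that the bound on the integral really gives $y$ inside the section as defined by strict inequality, follows from the convexity of $\phi$.

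The main obstacle is clearly part (a), which depends on the full strength of Savin's boundary localization theorem together with its $C^{1,\alpha}$ consequences; this is where all the real difficulty lies. The passage from (a) to the volumetric statement about maximal sections is entirely elementary once the Hölder estimate is in hand.
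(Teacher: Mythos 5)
Your proof follows the same route as the paper: the Hölder estimate for $\nabla\phi$ is obtained by citing \cite[Proposition~2.6]{LS}, and the set inclusion $\overline{\Omega}\subset S_\phi(x_0,M)$ then follows by bounding $\phi(x)-\phi(x_0)-\nabla\phi(x_0)\cdot(x-x_0)$ via the Hölder seminorm of $\nabla\phi$ together with the diameter bound $\diam\Omega\leq 2/\rho$ from \eqref{global-tang-int}. Your version merely spells out the fundamental-theorem-of-calculus computation that the paper states in one line, and adds the (correct) observation that one should take $M$ strictly above the resulting bound $M_0$ to respect the strict inequality in the definition of a section.
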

\begin{proof}
 The global $C^{\alpha}$-estimate for the gradient is from \cite[Proposition~2.6]{LS}.  
Now let $x_0\in \overline{\Omega}$. Then for any $x\in \overline{\Omega}$, we have
\[
 \phi(x) -\phi(x_0) -\nabla\phi(x_0)\cdot (x-x_0)\leq [\nabla\phi]_{C^{\alpha}(\overline{\Omega})} \, |x- x_0|^{1+\alpha}\leq C (2\rho^{-1})^{1+\alpha}=:M.
\]
Thus $\overline{\Omega}\subset S_\phi(x_0, M)$ and the lemma follows.
\end{proof}

\section{Geometric properties of sections}
\label{geo_sec}
In this section, we establish several important properties of boundary sections of solutions to the Monge-Amp\`ere equation. Unless otherwise stated, 
the convex domain $\Omega$ and the convex function $\phi$ are assumed to satisfy \eqref{global-tang-int}--\eqref{global-sep}. Under these conditions, we will show that sections of $\phi$ satisfies a dichotomy,  the engulfing and separating properties, and their volumes have expected growth. 
\subsection{Dichotomy of sections and volume growth} 
In this subsection, we shall prove the volume growth of sections and a dichotomy for sections of solutions to the Monge-Am\`pere equation: any section is either an interior section or included in a boundary section with comparable height. 
We begin with the proof of Proposition~\ref{dicho}.

\begin{proof}[Proof of Proposition~\ref{dicho}]
It suffices to consider the case $x_{0}\in\Omega$. Let $S(x_{0}, \bar{h}(x_{0}))$ be the maximal interior section with center $x_{0}$, and let $\bar{h}:= \bar{h}(x_{0})$.  If 
$t_{0}\leq \bar{h}/2$ then $(i)$ is satisfied. We now consider the case $\bh/2 <t_0$ and show that $(ii)$  holds. Without loss of generality, we assume that $\Omega\subset \R^{n}_{+}$, $\p S_{\phi}(x_{0}, \bar{h})$ is tangent to $\p\Omega$ at $0$, and
$\phi(0) =\nabla\phi (0)=0$. It follows that $0= \phi(x_0) - \nabla\phi(x_0) \cdot x_0 +\bh$ yielding
\begin{equation}\label{section-expression}
S_{\phi}(x_{0}, t) =\{x\in\overline{\Omega}: \phi(x)<\nabla\phi(x_0) \cdot x + t-\bh\}\quad \forall t>0.
\end{equation}
Next, we have

{\bf Claim:} There exists a small constant $c>0$ depending only on $\rho, \lambda, \Lambda$ and $n$ such that if
$\bh/2 <t_0\leq c$, then 
\begin{equation}
\label{big_sec_geo}
S_{\phi}(x_{0}, 2 t_{0}) \subset S_{\phi}(0, t^*)\quad\mbox{with}\quad t^*:= k_{0}^{-4} \bar{h} + 2 (2t_0 - \bh).
\end{equation}
Indeed, if $c$ is small, we have, by Proposition \ref{tan_sec}
\begin{equation}\label{gradient-formula}
\nabla \phi (x_{0}) = a e_{n}\quad\mbox{for some}\quad 
a\in [k_{0} \bar{h}^{1/2}, k_{0}^{-1}\bar{h}^{1/2}],
\end{equation}
where $k_{0}>0$ depends only on  $\rho, \lambda, \Lambda$ and $n$. It follows from \eqref{section-expression} and \eqref{gradient-formula} that
\begin{equation*}
S_{\phi}(x_{0}, 2 t_0) =\{x\in\overline{\Omega}: \phi(x)< a x_n  + 2 t_0-\bh\}.
\end{equation*}

Let us choose $c>0$ small enough such that 
$$k_{0}^{-4} \bar{h} + 2 (2t_0 - \bh) \leq 2k_{0}^{-4}c+ 4c \leq k,$$
where $k$ is the constant in the Localization Theorem~\ref{main_loc}. With this choice of $c$, we are going to show that \eqref{big_sec_geo} holds.
Suppose otherwise that \eqref{big_sec_geo} is not true. Then, using the convexity of the sets $S_{\phi}(x_{0}, 2 t_{0})$ 
and $S_{\phi}(0, t^*)$ and the fact that their closures both contain $0$, we can find a point $x\in\Omega$ such that $x\in \p S_{\phi}(0, t^*)\cap S_{\phi}(x_{0}, 2 t_{0})$.
At this point $x$, we have
\begin{align*}
k_{0}^{-4} \bar{h} + 2 (2t_0 - \bh)=t^*=\phi(x)
&< a x_n + 2t_{0} -\bh \leq k_{0}^{-1}\bar{h}^{1/2} x_{n} + 2 t_{0} -\bh.
\end{align*}
Hence,
$$\frac{k_{0}^{-4} \bar{h} + 2t_{0} -\bh}{k_{0}^{-1} \bar{h}^{1/2}}< x_{n}.$$
This together with the Localization Theorem~\ref{main_loc} applied to $S_{\phi}(0, k_{0}^{-4} \bar{h} + 2 (2t_0 - \bh))$ 
gives
$$\frac{k_{0}^{-4} \bar{h} + 2t_{0} -\bh}{k_{0}^{-1} \bar{h}^{1/2}}< k^{-1}\left(k_{0}^{-4} \bar{h} + 2 (2t_0 - \bh)\right)^{1/2}
\leq
k_{0}^{-1}\left(k_{0}^{-4} \bar{h} + 2 (2t_0 - \bh)\right)^{1/2},$$
or 
$$ k_{0}^{-4} \bar{h} + 2 t_0 -\bh < k_{0}^{-2} \bar{h}^{1/2} \left(k_{0}^{-4} \bar{h} + 2 (2t_0 - \bh)\right)^{1/2}.$$
Squaring, we get
$$k_{0}^{-8} \bar{h}^2 + 2  k_{0}^{-4}\bar{h} (2 t_0 -\bh)  + (2 t_0 -\bh)^2 < k_{0}^{-4} \bar{h}\left(k_{0}^{-4} \bar{h} + 2 (2t_0 - \bh)\right)
$$
and as a consequence, $( 2 t_0 -\bh)^2 <0.$
This is a contradiction and hence the claim is proved.

Let $\bar c :=\max{\{2k_{0}^{-4} + 4, M/c\}}$, where $M$ is the universal constant given by Lemma~\ref{global-gradient}. Then
from the claim and since 
$$t^{\ast} \leq (2k_{0}^{-4} + 4) t_{0} \leq \bar{c} t_{0},$$
we see that $(ii)$ holds if $\bh/2 < t_0\leq c$. 

In the case $c<t_0$, by using Lemma~\ref{global-gradient} we obtain
\[
 S_{\phi}(x_{0}, 2 t_{0}) \subset \overline{\Omega} \subset S_{\phi}(0, M)\subset  S_{\phi}(0, \frac{M}{c} t_0)\subset  S_{\phi}(0, \bar{c} t_0)
\]
and thus $(ii)$ also holds true.
\end{proof}

As an application of the dichotomy of sections, we obtain their volume growth.
\begin{proof}[Proof of Corollary \ref{volume_growth}]
Let $c_0:= k/\bar{c}$, where $k$, $\bar{c}$ are the constants in Theorem~\ref{main_loc} and Proposition~\ref{dicho} respectively. Let  $S(x_0,t_0)$ be a section with $t_0\leq c_0$.
If $x_{0}\in \partial\Omega$ then by the Localization Theorem~\ref{main_loc}, we get the desired result. 

Now, we suppose that $x_{0}\in \Omega$ and let $S(x_0, \bh(x_0))$ be the maximal interior section with center $x_0$.  For simplicity, we denote $\bar{h}:= \bar{h}(x_{0})$.
 If $t_{0}\leq \bar{h}/2$, then the result follows from the volume growth of interior sections; see \cite[Corollary 3.2.4]{G}. Therefore, it remains to consider the case  $\bar{h}/2 < t_{0}\leq c_0$. \\
In this case, by Proposition~\ref{dicho} we have $S(x_0, 2 t_0)\subset S(z, \bar{c} t_0)$ for some $z\in \partial\Omega$. Hence, by applying  Theorem~\ref{main_loc} we obtain the second inequality in \eqref{big_sec_vol}. 
To prove the first inequality in \eqref{big_sec_vol}, we first note that, if $t_{0}<2 \bar{h}$ then
$$|S(x_{0}, t_{0})|\geq |S(x_{0}, \frac{\bar{h}}{2})|\geq C_{0}\bar{h}^{n/2}\geq C_{1}t_{0}^{n/2}.$$
Here, the second inequality follows from the volume growth of interior sections; see \cite[Corollary 3.2.4]{G}.
Next, suppose that $t_{0}\geq 2\bar{h}$.
Without loss of generality, we assume that $\Omega\subset \R^{n}_{+}$, $\p S(x_{0}, \bar{h})$ is tangent to $\p\Omega$ at $0$, and $\phi(0) =\nabla\phi (0)=0.$
Then by exactly the same arguments as in the proof of Proposition~\ref{dicho}, we get for some positive number $a$
$$S(x_{0}, t_{0}) =\{x\in\overline{\Omega}: \phi(x)< a x_n  + t_{0}-\bh\}.$$
Using this and the fact  $x_n\geq 0$ in $\Omega$, we can conclude that
$$S(x_{0}, t_{0})\supset \{x\in\overline{\Omega}: \phi(x)< t_{0}-\bar{h}\}\supset \{x\in \overline{\Omega}: \phi(x)<\frac{t_{0}}{2}\}=S_\phi(0, \frac{t_{0}}{2})$$
which together with the Localization Theorem~\ref{main_loc} yields the first inequality in \eqref{big_sec_vol}.
\end{proof}

\subsection{Engulfing and separating properties of sections}

In this subsection, we will establish two important tools: the engulfing and separating properties
of sections, Theorem~\ref{engulfing2} and Proposition~\ref{separating}, respectively. These properties are equivalent.\\

As a first step in the proof of Theorem~\ref{engulfing2}, we prove the engulfing property when the center $x$ lies on the boundary, that is, $S_\phi(x, t)$ is a boundary section. 
Without loss of generality, we assume that $x$ is the origin and we write $S(t)$ for $S_\phi(0, t)$. Furthermore, we assume that $\phi$ and $\Omega$ satisfy the hypotheses of the Localization Theorem \ref{main_loc} at the 
origin.
\begin{lemma}\label{engulfing}
There exists  $\theta_{0}>0$ depending only on $\rho$, $\lambda$, $\Lambda$ and $n$ such that 
if $X\in S(t)$ with $t>0$, then 
$$S(t)\subset S_{\phi}(X, \theta_{0} t).$$
\end{lemma}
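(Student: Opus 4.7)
The plan is to normalize the boundary section $S(t)$ via Savin's Localization Theorem and then reduce the engulfing inequality to a universal gradient bound for the rescaled function.

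First I would dispose of $t\geq c_0$ for a small universal $c_0$ to be fixed later. Lemma~\ref{global-gradient} produces a universal $M$ with $\overline{\Omega}\subset S_\phi(X,M)$ for every $X\in \overline{\Omega}$, so
\[
S(t)\subset \overline{\Omega}\subset S_\phi(X,M)\subset S_\phi(X,(M/c_0)\,t),
\]
and any $\theta_0\geq M/c_0$ handles this regime.

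For $t\leq c_0$, I would use the Localization Theorem at the origin to define the affine map $T:=t^{-1/2}A_t$ and the rescaling $\phi_t(z):=t^{-1}\phi(T^{-1}z)$ on $\Omega_t:=T\,\Omega$. By Theorem~\ref{main_loc} and Section~\ref{rescale-sec}, $T(S(t))=S_{\phi_t}(0,1)$ is sandwiched between $B_k\cap \overline{\Omega_t}$ and $B_{k^{-1}}^+$; the rescaled function $\phi_t$ solves $\lambda\leq \det D^2\phi_t\leq \Lambda$ with $\phi_t(0)=0$ and $\nabla\phi_t(0)=0$, so $\phi_t\geq 0$ on $\Omega_t$ and $\phi_t<1$ on $S_{\phi_t}(0,1)$. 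Setting $\tilde X:=TX$ and $\tilde Y:=TY$, a direct chain-rule computation gives
\[
\phi(Y)-\phi(X)-\nabla\phi(X)\cdot(Y-X)=t\,\bigl[\phi_t(\tilde Y)-\phi_t(\tilde X)-\nabla\phi_t(\tilde X)\cdot(\tilde Y-\tilde X)\bigr],
\]
so $S(t)\subset S_\phi(X,\theta_0 t)$ is equivalent to $S_{\phi_t}(0,1)\subset S_{\phi_t}(\tilde X,\theta_0)$. Denoting the bracketed quantity by $R(\tilde Y)$, the bounds $0\leq \phi_t(\tilde X)$, $\phi_t(\tilde Y)<1$, and $|\tilde Y-\tilde X|\leq 2/k$ give
\[
R(\tilde Y)\leq 1+(2/k)\,|\nabla\phi_t(\tilde X)|,
\]
so everything reduces to a universal bound $|\nabla\phi_t|\leq C$ on $S_{\phi_t}(0,1)$.

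This universal gradient bound is the main obstacle. By Lemma~\ref{sep-lem}(a), $\phi_t$ separates quadratically from its tangent planes on $\partial\Omega_t\cap B_{2/k}$ with universal constants. Since $S_{\phi_t}(0,1)\subset \overline{\Omega_t}\cap B_{k^{-1}}\subset \overline{\Omega_t}\cap B_{2/k}$, the local boundary $C^{1,\alpha}$ regularity that underlies Lemma~\ref{global-gradient} applies to $\phi_t$ on this neighborhood and yields $[\nabla\phi_t]_{C^\alpha(\overline{\Omega_t}\cap B_{k^{-1}})}\leq C$ for universal $C,\alpha$. Combined with $\nabla\phi_t(0)=0$, this gives $|\nabla\phi_t(x)|\leq C|x|^\alpha\leq Ck^{-\alpha}$ on $S_{\phi_t}(0,1)$, and then $\theta_0:=\max\{M/c_0,\,1+2Ck^{-\alpha-1}\}$ works. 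The delicate point is that Savin's boundary gradient regularity must be invoked for $\phi_t$ with constants independent of $t$, which is possible thanks to the local and affine-invariant nature of his theory.
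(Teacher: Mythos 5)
Your reduction for $t\le c_0$ is set up correctly (the rescaling identity, the estimate $R(\tilde Y)\le 1+(2/k)|\nabla\phi_t(\tilde X)|$, and the treatment of $t\ge c_0$ via Lemma~\ref{global-gradient} all match what is needed), but there is a genuine gap exactly at the point you flag as delicate: the universal bound $|\nabla\phi_t|\le C$ on all of $S_{\phi_t}(0,1)$. This is not available from the paper's toolkit. Lemma~\ref{global-gradient} cannot be applied to the pair $(\Omega_t,\phi_t)$, since the hypotheses \eqref{global-tang-int}--\eqref{global-sep} fail for $\Omega_t=t^{-1/2}A_t\Omega$ (its diameter is of order $t^{-1/2}$, and quadratic separation is only known on $\partial\Omega_t\cap B_{2/k}$ by Lemma~\ref{sep-lem}(a)); and the only gradient estimate recorded for the rescaled functions, Lemma~\ref{sep-lem}(b), holds only in small balls $B_r$ with $r\le c$, whereas $S_{\phi_t}(0,1)$ extends to radius $k^{-1}\gg c$. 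Invoking ``the local boundary $C^{1,\alpha}$ regularity that underlies Lemma~\ref{global-gradient} with constants independent of $t$'' is therefore a substantive unproved claim: one would have to run the localization argument at every boundary point of $\partial\Omega_t\cap B_{2/k}$, bound $|\nabla\phi_t|$ at those points, and patch with interior estimates; note that the naive convexity bound for the gradient does not rescue you, because on a fixed neighborhood of $S_{\phi_t}(0,1)$ the function $\phi_t$ is only bounded by powers of $|\log t|$, not universally. In the original variables your needed bound reads $|(A_t^{-1})^{T}\nabla\phi(X)|\le C\,t^{1/2}$ for all $X\in S(t)$, which is essentially of the same strength as the engulfing estimate being proved, so without an independent proof of it the reduction is close to circular.

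The paper sidesteps this with one extra twist: it rescales at the larger height $h=M_1t$, with $M_1$ a large universal constant, rather than at $h=t$. Then $T(S(t))=S_{\phi_h}(0,1/M_1)$, which by \eqref{small-sec} is contained in a ball of radius $CM_1^{-1/2}\log M_1\le c$, i.e. precisely in the small neighborhood of the origin where Lemma~\ref{sep-lem}(b) does give the universal bound $|\nabla\phi_h|\le c|\log c|^2$; your computation then goes through verbatim and yields $\theta_0=1+2M_1c^2|\log c|^2$ in this regime (with $c_0:=k/M_1$ so that $h\le k$), while the case $t>c_0$ is handled exactly as you did. So the fix is simply to rescale at height $M_1t$ instead of $t$.
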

\begin{proof} Let $t\leq c_{0}$ with $c_{0}\leq k$ to be chosen later, where $k=k(\rho,\lambda,\Lambda,n)$ is the small constant in the Localization Theorem~\ref{main_loc}. Let us consider $h\in [t, k]$.  Let $A_{h}$ be the linear transformation associated with the section $S(h)$ as determined by the above theorem. 
Let 
\[\phi_{h}(z) :=\frac{\phi(h^{1/2}A_{h}^{-1}z)}{h}\quad\mbox{for}\quad
z\in\Omega_{h} := h^{-1/2} A_{h}\Omega.
 \]
For $X, Y\in S(t)$, we define
$$x := h^{-1/2} A_{h} X, \quad y := h^{-1/2}A_{h}Y.$$
Then
$$S_{\phi_{h}}(0, 1) = h^{-1/2} A_{h} S(h); \quad  x, y \in S_{\phi_{h}}(0, \frac{t}{h})$$
and furthermore,
$$\frac{1}{h} \big[\phi(Y)-\phi(X)- \nabla \phi(X)\cdot (Y-X)\big]= \phi_{h}(y)- \phi_{h}(x)-\nabla \phi_{h}(x)\cdot (y-x).$$
By Lemma \ref{sep-lem} (a), $\phi_{h}$ also satisfies the hypotheses of the Localization Theorem~\ref{main_loc} in $S_{\phi_{h}}(0, 1)$. Hence, by \eqref{small-sec}, we have for $C$ universal
$$|x|, |y|\leq C(\frac{t}{h})^{1/2}|log (\frac{t}{h})|.$$
Now, we take $h>0$ satisfying $t/h= 1/M_{1}$ with $M_{1}>1$ is chosen so that 
$$C(\frac{t}{h})^{1/2}|log (\frac{t}{h})| = CM_{1}^{-1/2} log M_{1}\leq c,$$
where $c$ is determined by Lemma~\ref{sep-lem}. Given this choice of $M_{1}$, it suffices to take $c_{0} := \frac{k}{M_{1}}$ so as to have $h\in [t, k]$. Then, by Lemma~\ref{sep-lem} (b), 
$$|\nabla \phi_{h}(x)|\leq c |log c|^{2}.$$
Thus,
$$\frac{1}{h} \big[\phi(Y)-\phi(X)- \nabla \phi(X)\cdot(Y-X)\big] = \phi_{h}(y)- \phi_{h}(x)-\nabla \phi_{h}(x)\cdot (y-x) \leq \frac{t}{h} + 2 c^2 |log c|^2$$
implying
$$Y\in S_{\phi}\big(X, (1 + 2 M_1 c^2 |log c|^2) t\big).$$
Hence for any $X\in S(t)$ with $t\leq c_0$, we get
$$S(t)\subset S_{\phi}\big(X, (1 + 2 M_1 c^2 |log c|^2) t\big).$$
In the case $X\in S(t)$ with $t>c_0$, then by using Lemma~\ref{global-gradient} we obtain
\begin{align*}
 S(t)\subset \overline{\Omega}\subset S_\phi(X, M) \subset S_\phi(X, \frac{M}{c_0} t). 
\end{align*}
Therefore, by taking $\theta_{0} := \max{\{1 + 2 M_1 c^2 |log c|^2, M/c_0\}}$, we see that 
$S(t)\subset S_{\phi}(X, \theta_{0} t)$  for any $t>0$.
\end{proof}

We are now ready to give the proof of Theorem \ref{engulfing2}.
\begin{proof}[Proof of Theorem \ref{engulfing2}]
By  Lemma \ref{engulfing}, it remains to consider the case  $x\in \Omega$. Let $S(x,\bh(x))$ be the  maximal interior section with center $x$.\\
 If $t< \bar{h}(x)/2$ then $S(x, 2t)$ is an interior section and the result follows from the engulfing properties of
interior sections of the Monge-Amp\`ere equation with bounded right hand side 
(see the proof of Theorem~3.3.7 in \cite{G}), namely, 
$$S(x, t)\subset S(y, \theta t)$$
for some $\theta$ depending only on $\lambda, \Lambda$ and $n$.\\
Now, consider the case 
$\bar{h}(x)/2\leq t$. Then  Proposition~\ref{dicho} implies that 
\begin{equation*}
 S(x, 2 t) \subset S(z, \bar{c}t)
\end{equation*}
for some $z\in\partial\Omega$. Since $y\in S(z, \bar{c}t)$, by the engulfing property of boundary sections  from Lemma~\ref{engulfing}, we have 
$$S(z, \bar{c} t)\subset S(y, \theta_{0} \bar{c} t).$$
Therefore the result follows with $\theta_{\ast} :=\max\{\theta, \theta_{0} \bar{c}\}$ noting that
$S(x, t) \subset S(z, \bar{c}t)$.
\end{proof}

Finally, we prove the separating property of sections. 

\begin{proof}[Proof of Proposition \ref{separating}] 
(i) Suppose that $y\not\in S(x, t)$.
If $z\in S(y, \frac{t}{\theta_{\ast}^2})\cap S(x, \frac{t}{\theta_{\ast}^2})$, then, by Theorem \ref{engulfing2}, 
$$S(y, \frac{t}{\theta_{\ast}^2})\cup S(x, \frac{t}{\theta_{\ast}^2})\subset S(z,\frac{t}{\theta_{\ast}}).$$
This implies that
$y, x\in S(z,\frac{t}{\theta_{\ast}})$ and also by the engulfing property that
$S(z,\frac{t}{\theta_{\ast}})\subset S(x, t)$. Therefore, $y\in S(x, t)$ which is a contradiction and so 
$S(y, \frac{t}{\theta_{\ast}^2})\cap S(x, \frac{t}{\theta_{\ast}^2})=\emptyset$.\\
(ii) Suppose $y\in S(x,t)$.  We need to prove that
\begin{equation}
\label{engulfing3}
S(x, t)\subset S(y, \theta^2 t).
\end{equation}
Suppose that it is not true. Then, we can find $z\in S(x, t)$ such that
\begin{equation}
\label{out-section}
z\not\in S(y, \theta^2 t).
\end{equation}
Since $y, z\in S(x, t)$, it follows from the separating property that
\begin{equation}
\label{in-section}
x\in S(y,\theta t)\cap S(z,\theta t).
\end{equation}
Applying the separating property to \eqref{out-section}, we get
$S(z, \theta t)\cap S(y, \theta t) =\emptyset.$
This is a contradiction to \eqref{in-section}. Hence (\ref{engulfing3}) holds as desired.
\end{proof}

\section{A covering theorem and maximal function estimates}
\label{covering_sec}

In this section we establish  a covering lemma of Besicovitch type. Using this lemma, we prove a covering theorem for boundary sections and derive a global strong-type $p-p$ estimate for the maximal function. Our results extend those in \cite{CG1}  
where interior sections are considered. We begin with the proof of Lemma \ref{Modified-Besicovitch}.

\begin{proof}[Proof of Lemma \ref{Modified-Besicovitch}]
We may assume  that $M= \sup\{t:  S(x,t)\in \calF\}$. Let us first consider the family
\[
\calF_0 := \{S(x,t)\in \calF:\, \frac{M}{2}< t\leq M\} \quad \mbox{and}\quad A_0 := \{x: \, S(x,t)\in \calF_0\}. 
\]
Pick $S(x_1,t_1)\in \calF_0$ such that $t_1> 3M/4$. If  $A_0\subset S(x_1,t_1)$, we stop. Otherwise, the set
\[\Big\{t:\, S(x,t)\in\calF_0\mbox{ and } x\in A_0\setminus S(x_1,t_1)\Big\}
\]
is nonempty and we let $\alpha_2$ denote its supremum. Pick  $t_2$ in this set such that $t_2 > 3\alpha_2/4$, and let $S(x_2, t_2)$ be the corresponding section. We then have 
$x_2\not\in S(x_1, t_1)$ and $t_1> 3M/4 \geq 3\alpha_2/4\geq  3 t_2/4$.
Again, if $A_0\subset S(x_1,t_1)\cup S(x_2, t_2)$ we stop. Otherwise, we continue the process. As a result, we have constructed a family, possible infinite, which we denote by
\[
\calF_0' =\{S(x^0_i, t^0_i)\}_{i=1}^\infty \quad \mbox{with}\quad x^0_j\in A_0\setminus \bigcup_{i<j}{S(x^0_i, t^0_i)}.
\]  

We next consider the family
\[
\calF_1 := \{S(x,t)\in \calF:\, \frac{M}{4}< t\leq \frac{M}{2}\} \quad \mbox{and}\quad A_1 := \Big\{x: \, S(x,t)\in \calF_1\mbox{ and } x\not\in \bigcup_{i=1}^\infty{S(x^0_i, t^0_i)}\Big\}. 
\]
We repeat the above construction for the set $A_1$ and obtain a family of sections denoted by 
\[
\calF_1' =\{S(x^1_i, t^1_i)\}_{i=1}^\infty \quad \mbox{with}\quad x^1_j\in A_1\setminus \bigcup_{i<j}{S(x^1_i, t^1_i)}.
\]  

We continue this process and in the $k$th-stage we consider the family
\[
\calF_k := \{S(x,t)\in \calF:\, 
\frac{M}{2^{k+1}}< t\leq \frac{M}{2^k}\}
\]
and 
\[
A_k := \Big\{x: \, S(x,t)\in \calF_k
\mbox{ and }  x\not\in \mbox{sections previously selected}\Big\}. 
\]
In the same way as before, we obtain a family of sections denoted by 
\[
\calF_k' =\{S(x^k_i, t^k_i)\}_{i=1}^\infty \quad \mbox{with}\quad x^k_j\in A_k\setminus \bigcup_{i<j}{S(x^k_i, t^k_i)}.
\]  

We are going to show that the collection of all sections in all generations $\calF_k'$, $k\geq 0$, is the family that satisfies the conclusion of the lemma.

{\bf Claim 1:} The overlapping in each generation $\calF_k'$ is at most $\kappa$, where $\kappa$ depends only on $\rho, \lambda, \Lambda$ and $n$.
To show this, let us suppose that
\[
 z\in S(x^k_{j_1}, t^k_{j_1})\cap \cdots \cap S(x^k_{j_N}, t^k_{j_N}),
\]
with $S(x^k_{j_i}, t^k_{j_i})\in \calF_k'$ and $j_1< j_1<\cdots <j_N$. For simplicity we 
set $x^k_{j_i}=x_i$, $t^k_{j_i}=t_i$, and let $t_0$ be the maximum of all these $t_i$. 
Then, by the engulfing property of sections in Theorem~\ref{engulfing2}, we have 
\begin{equation}\bigcup_{i=1}^{N} S(x_{i}, t_{i})\subset S(z, \theta_{\ast}t_{0}).
 \label{all_included}
\end{equation}
For any $l>i$, as $x_l\not\in S(x_i, t_i)$ we obtain  from the separating property of sections in Proposition~\ref{separating} that  $$S(x_{l}, \frac{t_{i}}{\theta_{\ast}^2})\cap S(x_{i}, \frac{t_{i}}{\theta_{\ast}^2})=\emptyset.$$ 
Since  
$$\frac{M}{2^{k+1}}<t_{i}\leq \frac{M}{ 2^{k}},$$
we then conclude 
that $t_{i}>t_{0}/2$ and thus
\begin{equation}\label{disjoint-same-generation}
S(x_{l}, \frac{t_{l}}{2\theta_{\ast}^2})\cap S(x_{i}, \frac{t_{i}}{2 \theta_{\ast}^2})=\emptyset;\quad
S(x_{l}, \frac{t_{0}}{2\theta_{\ast}^2})\cap S(x_{i}, \frac{t_{0}}{2 \theta_{\ast}^2})=\emptyset.
\end{equation}
Combining this fact with \eqref{all_included}, we obtain
\begin{equation}\label{eq:claim1}
\sum_{i=1}^{N} |S(x_{i},\frac{t_{0}}{2 \theta_{\ast}^2})|\leq |S(z, \theta_{\ast}t_{0})|.
\end{equation}
Let $c_0$ be the universal constant in Corollary~\ref{volume_growth}. If $\theta_{\ast} t_0\leq c_0$, then 
it follows from \eqref{eq:claim1} and the volume estimates \eqref{big_sec_vol} that
$$\sum_{i=1}^{N} C_{1} \left(\frac{t_{0}}{2 \theta_{\ast}^2}\right)^{n/2} \leq C_{2} (\theta_{\ast} t_{0})^{n/2}$$
and thus
 $$N\leq N_1 :=\frac{C_{2}}{C_{1}} 2^{n/2} \theta_{\ast}^{3n/2}.$$
In the case $\theta_{\ast} t_0 > c_0$, then
\eqref{eq:claim1} implies  $\sum_{i=1}^{N} |S(x_{i},\frac{c_0}{2 \theta_{\ast}^3 })|\leq |\Omega|\leq |B_{1/\rho}|$. So by applying  \eqref{big_sec_vol} again, we get
\[
\sum_{i=1}^{N}{C_1 \left(\frac{c_0}{2 \theta_{\ast}^3}\right)^{n/2}} \leq |B_{1/\rho}|
\]
 yielding
\[
N\leq N_2 := \frac{|B_{1/\rho}|}{C_1 c_0^{n/2}}  2^{n/2} \theta_{\ast}^{3n/2}.
\]
Therefore the overlapping in each generation $\calF_k'$ is at most $\kappa$, where $\kappa :=\max{\{N_1, N_2\}}$.

{\bf Claim 2:} The family $\calF_k' =\{S(x^k_i, t^k_i)\}_{i=1}^\infty$ is actually finite. Indeed, by Claim~1
\[
\sum_{i}{\chi_{S(x^k_i, t^k_i)}(x)}\leq \kappa
\]
and hence by integrating over $\Omega$ we obtain
\[
\sum_{i}{|S(x^k_i, t^k_i)|}\leq  \kappa \, |\Omega|.
\]
Note that $M/2^{k+1} < t^k_i$. Therefore if we let $a :=\min{\{M/2^{k+1}, c_0\} }$, then 
it follows from the above inequality and Corollary~\ref{volume_growth}  that
\[
\sum_{i}{C_1 a^{n/2}}\leq \sum_{i}{|S(x^k_i, a)|} \leq \kappa\, |\Omega|,
\]
implying that the number of terms in the sum is finite and Claim~2 is proved.

 From Claim~2 and our construction we get $A_k\subset \cup_{i=1}^\infty{S(x^k_i, t^k_i)}$ and thus $(i)$ holds. Also since each generation $\calF_k'$ has 
a finite number of members, by relabeling the indices of all members of all generations $\calF_k'$ we obtain $(ii)$.

In order to prove property $(iii)$, let $x_i \neq x_j$.  If $S(x_i, t_i)$ and $S(x_j, t_j)$ belong to the same generation, then $\, S(x_i, t_i/\alpha)\cap S(x_j, t_j/\alpha) =\emptyset\,$ by \eqref{disjoint-same-generation}, 
where $\alpha := 2\theta_{\ast}^2$. On the other hand, suppose $S(x_i, t_i) \in \calF_k'$ and $S(x_j, t_j)\in \calF_{k+p}'$ for some $p\geq 1$. Then, by construction, $x_j\not\in S(x_i, t_i)$ and so
$S(x_j, t_i/\theta_{\ast}^2)\cap S(x_i, t_i/\theta_{\ast}^2) =\emptyset$ by the separating property in Proposition~\ref{separating}. Since $t_i>t_j$, this gives  
$S(x_j, t_j/\theta_{\ast}^2)\cap S(x_i, t_i/\theta_{\ast}^2) =\emptyset$ and so $(iii)$ is proved.

{\bf Claim 3:} Assume  $0<r \leq c_0$. Then the number of sections $S(x,t)$  in the family $\{S(x_k, t_k)\}$  with $t\geq r$  is bounded by a constant $N$ depending only on $r, \rho, \lambda, \Lambda$ and $n$. To see this, let us denote this subfamily of sections by $\{S(x_k,t_k)\}_{k\in I}$. Then by using property $(iii)$ and Corollary~\ref{volume_growth} we obtain
\begin{align*}
|B_{\frac{1}{\rho}}|\geq |\Omega|\geq \big|\bigcup_{k\in I} S(x_k, \frac{t_k}{\alpha})\big| =\sum_{k\in I} \big| S(x_k, \frac{t_k}{\alpha})\big|
\geq \sum_{k\in I} \big| S(x_k, \frac{r}{\alpha})\big| \geq \sum_{k\in I} C_1 \big(\frac{r}{\alpha}\big)^{n/2}.
\end{align*}
Thus the  number of elements in $I$ is bounded by $N:=C_1^{-1} (\alpha r^{-1})^{n/2} |B_{1/\rho}|$.

  We next estimate the overlapping of sections belonging to different generations. Let $0<\e<1$ and
\begin{equation}\label{overlapping}
z\in \bigcap_{i} S\big(x^{e_i}_{r_i}, (1-\e) t^{e_i}_{r_i}\big),
\end{equation}
where $e_1< e_2<\cdots$, $M 2^{-(e_i +1)} < t^{e_i}_{r_i} \leq M 2^{- e_i}$, and for simplicity in the notation  we set $x_i= x^{e_i}_{r_i}$ and $t_i = t^{e_i}_{r_i}$.
Our aim is to show that the number of sections in \eqref{overlapping} is not more than $C \log{\frac{1}{\e}}$.
We only need to consider $\e< 1-\frac{1}{\alpha}$ since otherwise the sections are disjoint by $(iii)$. Let 
$r_0 := \min{\{\frac{k}{\bar{c} \theta_{\ast} M_1}, \frac{c}{\bar{c} \theta_{\ast} M_1}, c_0\}}$, where $c$ is the constant in Lemma~\ref{sep-lem} and  $M_{1}$ 
is chosen as in the proof of Lemma~\ref{engulfing}.  Then by Claim~3 and in view of our purpose, we can assume 
without loss of generality that $t_i\leq r_0$ for all $i$ appearing in \eqref{overlapping}. Now for any $j>i$, we claim that 
\begin{equation}\label{gap-estimate}
e_j - e_i \leq C \log{\frac{1}{\e}},
\end{equation}
where $C>0$ depends only on $\rho, \lambda, \Lambda$ and $n$.
In particular, the number of members in \eqref{overlapping} is at most $C \log{\frac{1}{\e}}$, which together with 
Claim~1 gives $(iv)$ as desired. 

To prove the claim,  observe first that  by the engulfing property of sections
in Theorem~\ref{engulfing2} we have
\begin{equation}
\label{enlarge-sec}
S(x_j, t_j) \subset S(z, \theta_{\ast} t_j)\quad\mbox{and}\quad
S(x_i, t_i) \subset S(z, \theta_{\ast} t_i).
\end{equation}
Let $S(z,\bh(z))$ be the maximal interior section with center $z$ when $z$ is an interior point of $\Omega$. The case $z\in \partial\Omega$ will
be dealt with briefly at the end of the proof. We then consider the following possibilities:

{\bf Case 1:} $\theta_{\ast} t_i< \bh(z)$. Then both sections $S(x_i, t_i)$ and $S(x_j, t_j)$ are interior sections and \eqref{gap-estimate} follows from the proof in \cite[Lemma~6.5.2]{G}. We include 
the proof here for the sake of completeness.  Let $T$ be an affine map normalizing the section $S(x_i, t_i)$. Then 
since $t_i>t_j$, by \cite[Theorem~3.3.8]{G}, there exists $\e_{1}$ depending on $\rho, \lambda, \Lambda$ and $n$  such that
\[
T (S(x_j,  t_j))\subset  B\Big(T x_j, K_1\big(\frac{t_j}{t_i}\big)^{\e_1}\Big).
\]
 By construction $x_j \not\in S(x_i, t_i)$ and hence by \cite[Corollary~3.3.6]{G} we obtain
\[
 B(T x_j, C \e^n) \cap T(S(x_i, (1-\e)t_i))=\emptyset.
\]
We deduce from the above two relations that
\[
 C \e^n < |T x_j- T z| \leq K_1\big(\frac{t_j}{t_i}\big)^{\e_1} \leq K_1 2^{\e_1} 2^{(e_i - e_j)\e_1}
\]
implying \eqref{gap-estimate}.

{\bf Case 2:} $\theta_{\ast} t_i\geq \bh(z)$ and $\theta_{\ast} t_j\leq  \bh(z)/2$. We can assume that $\Omega\subset \R^{n}_{+}$, $\p S(z, \bar{h}(z))$ is tangent to $\p\Omega$ at $0$, and
$\phi(0) =\nabla\phi (0)=0$.
Then, by Proposition \ref{dicho}, we know that 
\begin{equation*}
S(z,\theta_{\ast} t_i)\subset  S(0, \bar{c} \theta_{\ast} t_i).
\end{equation*}
Let $M_{1}$ be chosen as in the proof of Lemma \ref{engulfing} and denote
$$C_{0} : = \bar{c} \theta_{\ast} M_{1};\quad h : = C_{0} t_{i}.$$
Then $h\leq k$ where $k$ is the constant in the Localization Theorem \ref{main_loc}.
We use the notation as in Subsection \ref{rescale-sec}. We rescale $\phi$ by \begin{equation*}
 \phi_h(x):=\frac{\phi(h^{1/2}A^{-1}_hx)}{h}
\end{equation*}
where $A_{h}$ is the linear map in the Localization Theorem~\ref{main_loc}.
The function $\phi_h$ is continuous and is defined in $\overline \Omega_h$ with
 $\Omega_h:= h^{-1/2}A_h \Omega,$
 and solves the Monge-Amp\`ere equation
 $$\det D^2 \phi_h=g_h(x), \quad \quad \lambda \le g_h(x) :=g(h^{1/2}A_h^{-1}x) \le \Lambda.$$
 The section at height 1 for $\phi_h$ centered at the origin satisfies
$S_{\phi_{h}}(0, 1)=h^{-1/2}A_hS (h),$ and by the localization theorem we obtain $$B_k \cap \overline \Omega_h \subset S_{\phi_{h}}(0, 1) \subset B_{k^{-1}}^+.$$
Let $T:= h^{-1/2} A_{h}.$
Due to our assumptions  $$\theta_{\ast} t_j \leq \bh(z)/2< C_0 t_i = h,$$ the 
section $S_{\phi_{h}}(T z, \theta_{\ast} t_j / h) = T(S(z, \theta_{\ast} t_j))$ is an 
interior section of $\phi_{h}$ in $S_{\phi_{h}}(0, 1)$. Therefore, from the proof of \cite[Theorem 3.3.8]{G}, we find 
$\e_{1}$ depending on $\rho,\lambda, \Lambda$ and $n$ such that
\[
S_{\phi_{h}}(T z, \frac{\theta_{\ast} t_j}{C_0 t_i})  \subset B(T z, K \big(\frac{\theta_{\ast} t_j}{ C_0 t_i}\big)^{\e_1}).
\]
Thus, by recalling \eqref{enlarge-sec}, we obtain
\begin{equation}\label{controlled-distance}
|T x_j - Tz|\leq K_1  \big(\frac{t_j}{t_i}\big)^{\e_1}.
\end{equation}
 We have
$$Tx_{i}, Tx_{j}, Tz\in T (S(z, \theta_{\ast}t_{i}))\subset T (S(0, \bar{c}\theta_{\ast} t_{i}))= S_{\phi_{h}}(0, \frac{\bar{c}\theta_{\ast} t_{i}}{C_{0}t_{i}})=
S_{\phi_{h}}(0, \frac{1}{M_{1}}).$$
By Lemma~\ref{sep-lem} (a), $\phi_{h}$ also satisfies the hypotheses of the Localization Theorem~\ref{main_loc} in $S_{\phi_{h}}(0, 1)$. Hence, by \eqref{small-sec}, $Tx_{i}, Tx_{j}, Tz$ belong to $B(0, c)$. As a result, we obtain from 
Lemma~\ref{sep-lem} (b) that, 
\begin{equation}\label{renormalized-gradient-est}
|\nabla \phi_{h}(Tx_{i})|, |\nabla \phi_{h}(w)| \leq c|log c|^2 
\end{equation}
for any $w\in [Tx_{j}, Tz]$.
Since $T x_j \not\in T(S(x_i, t_i))=S_{\phi_{h}}( T x_i, \frac{1}{C_0})$, we have
\begin{equation*}
\frac{1}{C_0} \leq  \phi_{h}(Tx_{j})- \phi_{h}(Tx_{i}) - \nabla \phi_{h}(Tx_{i})\cdot (Tx_{j}-Tx_{i}).
\end{equation*}
By rewriting the above right hand side in the form
$$\phi_{h}(Tz)- \phi_{h}(Tx_{i}) - \nabla \phi_{h}(Tx_{i})\cdot (Tz-Tx_{i}) + \phi_{h}(T x_j)- \phi_{h}(T z)-\nabla \phi_{h}(T x_i)\cdot (T x_j - Tz)$$
and using the fact that $T z \in T(S(x_i, (1-\e) t_i))=S_{\phi_{h}}( T x_i,  \frac{1-\e}{C_0})$, we  obtain
\begin{align} \label{epsilon-estimate}
\frac{1}{C_0} 
\leq  \frac{1-\e}{C_0}
+ \phi_{h}(T x_j)- \phi_{h}(T z)-\nabla \phi_{h}(T x_i)\cdot (T x_j - Tz)
\leq \frac{1-\e}{C_0}
+ C |T x_j- T z|
\end{align}
where \eqref{renormalized-gradient-est} is used to obtain the last inequality.
It follows from \eqref{controlled-distance} and \eqref{epsilon-estimate} that
\begin{align*}
\e  
\leq  C |T x_j- T z| \leq C \big(\frac{t_j}{t_i}\big)^{\e_1}\leq  C 2^{(e_i - e_j)\e_1}
\end{align*}
giving \eqref{gap-estimate}.

{\bf Case 3:} $\theta_{\ast} t_i\geq  \bh(z)$ and $\theta_{\ast} t_j>  \bh(z)/2$. We can assume 
that $\Omega\subset \R^{n}_{+}$, $\p S(z, \bar{h}(z))$ is tangent to $\p\Omega$ at $0$, and
$\phi(0) =\nabla\phi (0)=0$.
Then Proposition~\ref{dicho} gives
\begin{equation*}
S(z,\theta_{\ast} t_j)\subset  S(0, \bar{c}\theta_{\ast} t_j)
\quad\mbox{and} \quad S(z,\theta_{\ast} t_i)\subset  S(0, \bar{c}\theta_{\ast} t_i).
\end{equation*}
Let  $C_0$, $h$, $\phi_{h}$ and $T$ be defined as  in Case~2. 
By Lemma~\ref{sep-lem} (a), $\phi_{h}$ also satisfies the hypotheses of the Localization Theorem \ref{main_loc} in $S_{\phi_{h}}(0, 1)$. By this theorem and \eqref{small-sec}, we have
\[
 T(S(z,\theta_{\ast} t_j))\subset T(S(0, C_0 t_j))
= S_{\phi_{h}}(0, \frac{t_j}{t_i})  \subset  B\Big(0 , K\big(\frac{t_j}{t_i}\big)^{\frac14}\Big),
\]
as we can assume $t_j/t_i <k$ to prove \eqref{gap-estimate}.
It follows that
\begin{equation*}
|T x_j - Tz|\leq 2 K  \big(\frac{t_j}{t_i}\big)^{\frac14}.
\end{equation*}
We infer from  this and the same estimate as \eqref{epsilon-estimate} that
\begin{align*}
\e  
\leq  C \big(\frac{t_j}{t_i}\big)^{\frac14}\leq  C 2^{\frac14 (e_i - e_j)}
\end{align*}
giving \eqref{gap-estimate}. \\

Finally, we remark that when $z$ in \eqref{overlapping} is a boundary point of $\Omega$, say $0\in\p\Omega$ where  $\Omega\subset \R^{n}_{+}$,
then we also obtain \eqref{gap-estimate} exactly as in {\bf Case 3} of the interior points. 
Thus the proof of the lemma is complete.
\end{proof}

With the help of Lemma \ref{Modified-Besicovitch}, we are able to give the proof of the covering theorem.

\begin{proof}[Proof of Theorem \ref{thm:covering}]
 Let $0<\mu<1/2$ be arbitrary. By applying 
Lemma~\ref{Modified-Besicovitch} to the family $\calF :=\{S(x, t_x)\}_{x\in \calO}$, there exists a countable subfamily, denoted by $\{S(x_k, t_k)\}_{k=1}^\infty$, such that
$
\calO\subset  \bigcup_{k=1}^\infty{S(x_k, t_k)}$ and 
\begin{equation}\label{finite-overlap} 
\sum_{k=1}^\infty{\chi_{S(x_k, (1-\mu)t_k)}(x)}\leq K \log{\frac{1}{\mu}}.
\end{equation}
Let us write $S_k$ for $S(x_k, t_k)$ and $S_k^\mu$ for $S(x_k, (1-\mu)t_k)$. 
Then we have  
\begin{equation*}
|\calO| =|\calO \cap \cup_{k=1}^\infty S_k| = \lim_{N\to\infty}{|\calO \cap  \cup_{k=1}^N  S_k|}
\leq \limsup_{N\to \infty}{\sum_{k=1}^N |\calO \cap S_k|} 
= \e \limsup_{N\to \infty}{\sum_{k=1}^N |S_k|}. 
\end{equation*}
Moreover, by the doubling property  in 
Lemma~\ref{doubling-engulfing} (ii), we get
\[
|S_k| \leq C \, |S(x_k, \frac{t_k}{2})|
\leq  C \, |S\big(x_k, (1-\mu)t_k\big)| =C \, |S_k^\mu|.
\]
Therefore,
\begin{equation}\label{estimating-O}
|\calO| \leq C \e \limsup_{N\to \infty}{\sum_{k=1}^N |S_k^\mu|}.
\end{equation}
Next let $n_N^\mu(x)$ be the overlapping function for the family $\{S_k^\mu\}_{k=1}^N$ as in 
the proof of   \cite[Theorem~6.3.3]{G}, that is, 
\begin{equation*}
n^{\mu}_{N}(x) :=
\begin{cases}
\# \{k: x\in S^{\mu}_{k}\} & \text{if } x\in \cup_{k=1}^{N} S^{\mu}_{k},
\\
1 & \text{if }  x\not\in \cup_{k=1}^{N} S^{\mu}_{k}.

\end{cases}
\end{equation*}
Then  
$$\chi_{ \cup_{k=1}^{N} S^{\mu}_{k}}(x) = \frac{1}{n^{\mu}_{N}(x)}\sum_{k=1}^{N} \chi_{S^{\mu}_{k}}(x)$$
and
$n_N^\mu(x)\leq K \log{\frac{1}{\mu}}$ by \eqref{finite-overlap}, and hence 

\begin{align*}
  \sum_{k=1}^N{|S_k^\mu|}
  &= \int_{\Omega}{n_N^\mu(x) \frac{1}{n_N^\mu(x)} \sum_{k=1}^N{\chi_{S_k^\mu}}(x)\,dx}\leq  K \log{\frac{1}{\mu}} \int_{\Omega}{\frac{1}{n_N^\mu(x)} \sum_{k=1}^N{\chi_{S_k^\mu}}(x)\,dx}\\
  &=K \log{\frac{1}{\mu}} \int_{\Omega}{\chi_{\cup_{k=1}^N S_k^\mu}(x)\,dx}  =  K \log{\frac{1}{\mu}} \, |\cup_{k=1}^N{S_k^\mu}|.
\end{align*}
We infer from this and \eqref{estimating-O} that
\begin{align*}
|\calO| \leq C K \e \log{\frac{1}{\mu}} \, |\cup_{k=1}^\infty{S_k^\mu}|   \quad\mbox{for all}\quad 0<\mu <1/2.
\end{align*}
By choosing $\mu>0$ such that $\log{\frac{1}{\mu}}= 1/ (C K \sqrt{\e})$, we obtain $(ii)$  as desired.
\end{proof}

We end this section by establishing   some global estimates for the maximal function with respect to sections. The proof is based on  the covering lemma (Lemma~\ref{Modified-Besicovitch}) and the standard method.

\begin{proof}[Proof of Theorem \ref{strongtype}]
Let $A_\beta :=\{x\in \Omega : \calM(f)(x)>\beta \}$ and $M$ be the constant in Lemma~\ref{global-gradient}. By Lemma~\ref{global-gradient}, we have
\[
\dfrac{1}{|S_\phi(x,t)|}\int_{S_\phi(x,t)}|f(y)|\, dy =\dfrac{1}{|\Omega|}\int_{\Omega}|f(y)|\, dy\quad \forall t\geq M
\]
which implies that
\begin{equation*}
\mathcal M (f)(x)=\sup_{t\leq M}
\dfrac{1}{|S_\phi(x,t)|}\int_{S_\phi(x,t)}|f(y)|\, dy\quad \forall x\in \Omega.
\end{equation*}
Therefore for each $x\in A_\beta$, we can find $t_x \leq M$ satisfying
\begin{equation*}
\frac{1}{|S_\phi (x,t_x)|}\int_{S_\phi (x,t_x)} |f(y)|\, dy
\geq \beta.
\end{equation*}
Consider the family $\{S_\phi (x,2t_x)\}$. Then by Lemma~\ref{Modified-Besicovitch}, there exists a countable subfamily
$\{S_\phi (x_k,2t_k)\}_k$ such that $A_\beta \subset \bigcup_{k}
S_\phi (x_k,2t_k)$ and  $\sum_k \chi_{s_\phi
(x_k,(1-\epsilon)2t_k)}(x)\leq C\,\log\frac{1}{\epsilon}$ for
every $0<\epsilon <1/2$. In particular, 
\begin{equation*} |A_\beta| \leq \sum_k |S_\phi
(x_k,2t_k)|
\leq C \sum_k |S_\phi
(x_k,t_k)|
\leq C\sum_k |S_\phi (x_k,(1-\epsilon)2t_k)|
\end{equation*}
noting that  $|S_\phi
(x_k,2t_k)| \leq C\, |S_\phi
(x_k,t_k)|$ by the doubling property in Lemma~\ref{doubling-engulfing} (ii).
But  as
\begin{align*}
\beta \leq
\frac{1}{|S_\phi(x_k,t_k)|}\int_{S_\phi(x_k,t_k)}|f(y)|
\,dy\leq \frac{C}{|S_\phi
(x_k,(1-\epsilon)2t_k)|}\int_{S_\phi(x_k,(1-\epsilon)2t_k)}
|f(y)|\,dy,
\end{align*}
we conclude that
\begin{align*}
|A_\beta| & \leq 
\frac{C}{\beta}\sum_k
\int_{S_\phi(x_k,(1-\epsilon)2t_k)}|f(y)|\,dy
=\frac{C}{\beta}\sum_k \int_\Omega
\chi_{S_\phi(x_k,(1-\epsilon)2t_k)}(y)\,|f(y)|\,dy\\
&=\frac{C}{\beta}\int_\Omega
\sum_k
\chi_{S_\phi(x_k,(1-\epsilon)2t_k)}(y)\,|f(y)|\,dy
\leq \frac{C\,\log\frac{1}{\epsilon}}{\beta}\int_\Omega
|f(y)|\,dy.
\end{align*}
Thus we have proved the weak-type $1-1$ estimate in $(i)$. This together with the obvious inequality $\|\mathcal M(f)\|_{L^\infty(\Omega)}\leq \|f\|_{L^\infty(\Omega)}$ and the Marcinkiewicz interpolation 
lemma (see Theorem~5 in \cite[Page~21]{St})  yields the strong-type $p-p$ estimate in $(ii)$. Alternatively, $(ii)$ can be obtained by using the same arguments as in the proof of \cite[Theorem~2.8.2]{Z}. 
\end{proof}

\section{quasi-distance and  space of homogeneous type}\label{sec:quasi-distance}

In this section we will introduce a quasi-distance $d$ induced by sections of solutions $\phi$ to the Monge-Amp\`ere equation in $\Omega$. Moreover,  we show that $(\overline{\Omega}, d, 
\mu)$ is a space of homogeneous type, where $\mu := \det D^2\phi \,dx\,$ is the Monge-Amp\`ere measure. We begin with the following simple lemma.

\begin{lemma}\label{doubling-engulfing}
Assume that the convex domain $\Omega$ and the convex function $\phi$ satisfy \eqref{global-tang-int}--\eqref{global-sep}. For all  $x\in \overline{\Omega}$ and $t>0$, we have
\begin{myindentpar}{1cm}
(i) if $y\in S(x,t)$, then $S(y,t)\subset S(x, \theta_{\ast}^2 t)$;\\
(ii) $|S(x, 2t)| \leq C \, |S(x,t)|$.
\end{myindentpar}
Here $\theta_{\ast}$  is the engulfing constant and $C$ depends  only on  $\rho,\lambda, \Lambda$ and $n$. 
\end{lemma}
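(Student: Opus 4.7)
For part (i), the plan is to apply the engulfing property from Theorem \ref{engulfing2} twice. First, from $y\in S(x,t)$, Theorem \ref{engulfing2} gives $S(x,t)\subset S(y,\theta_{\ast}t)$; in particular, $x\in S(y,\theta_{\ast}t)$. Second, applying Theorem \ref{engulfing2} once more to the point $x\in S(y,\theta_{\ast}t)$ (with height $\theta_{\ast}t$ in place of $t$) yields $S(y,\theta_{\ast}t)\subset S(x,\theta_{\ast}^2 t)$. Since $\theta_{\ast}\geq 1$ (which follows by applying engulfing to $y\in S(x,t)$ itself; otherwise $S(x,t)$ would be strictly contained in itself), we have $S(y,t)\subset S(y,\theta_{\ast}t)\subset S(x,\theta_{\ast}^2 t)$, as required.

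For part (ii), the idea is to split into two regimes based on the threshold $c_0$ from Corollary \ref{volume_growth} and use the two-sided volume estimate \eqref{big_sec_vol} together with a trivial upper bound coming from $\Omega\subset B_{1/\rho}$. In the regime $2t\leq c_0$, both $|S(x,2t)|$ and $|S(x,t)|$ are controlled by Corollary \ref{volume_growth}, giving
\[
|S(x,2t)|\leq C_2 (2t)^{n/2}=2^{n/2}C_2\, t^{n/2}\leq \frac{2^{n/2}C_2}{C_1}\,|S(x,t)|.
\]
In the complementary regime $t>c_0/2$, one uses the trivial bound $|S(x,2t)|\leq|\Omega|\leq|B_{1/\rho}|$ together with monotonicity $|S(x,t)|\geq|S(x,c_0/2)|$ and the lower estimate of Corollary \ref{volume_growth} applied at height $c_0/2\leq c_0$, which gives $|S(x,t)|\geq C_1(c_0/2)^{n/2}$. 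Combining the two cases produces a universal doubling constant.

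I do not anticipate any genuine obstacle: part (i) is a direct two-step iteration of Theorem \ref{engulfing2}, and part (ii) is a routine consequence of the volume growth in Corollary \ref{volume_growth}, patched across the threshold $c_0$ using the fact that sections are contained in $\Omega\subset B_{1/\rho}$. The only mild care required is to verify that the engulfing constant $\theta_{\ast}$ is at least one, so that the inclusion $S(y,t)\subset S(y,\theta_{\ast}t)$ used in part (i) is legitimate, and to handle the ``large $t$'' regime in part (ii) where the power law $t^{n/2}$ ceases to be sharp.
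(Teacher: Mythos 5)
Your proof is correct and follows essentially the same path as the paper's: part (i) is the same two-step iteration of Theorem~\ref{engulfing2} (the paper's engulfing constant $\theta_{\ast}$ is $\geq 1$ by construction, so the extra justification you supply is true though slightly informal, relying on strict monotonicity of sections in height), and part (ii) is the same split at the threshold $c_0$ from Corollary~\ref{volume_growth}, using the two-sided power-law estimate on one side and the trivial bound $|S(x,2t)|\leq|\Omega|\leq|B_{1/\rho}|$ together with $|S(x,t)|\geq|S(x,c_0/2)|\geq C_1(c_0/2)^{n/2}$ on the other.
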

\begin{proof}
If $y\in S(x,t)$, then  $x\in S(x,t)\subset S(y, \theta_{\ast} t)$ by Theorem~\ref{engulfing2}. By applying again the engulfing property, we obtain $S(y, \theta_{\ast} t)\subset S(x, \theta_{\ast}^2 t)$ which gives $(i)$.

To prove the doubling property $(ii)$, let $c_0$ be the universal constant in  Corollary~\ref{volume_growth}. If $2 t\leq c_0$, then $(ii)$ follows from the volume growth given by  Corollary~\ref{volume_growth}. Now assume $2 t>c_0$. Then we have
\begin{align*}
|S(x, 2t)| \leq |\Omega|\leq |B_{\frac{1}{\rho}}|=  C \, C_1 \big(\frac{c_0}{2}\big)^{\frac{n}{2}}
\leq C \, |S(x,\frac{c_0}{2})|
\leq  C \, |S(x,t)|,
\end{align*}
where the third inequality is by Corollary~\ref{volume_growth}. The proof is thus complete.
\end{proof}

Let $\Omega\subset \R^n$ be a convex set and $\phi\in C(\overline{\Omega})$ be a convex function. We define a function $d: \overline{\Omega}\times \overline{\Omega}\longrightarrow [0,\infty)$ by
\begin{equation}\label{quasi-distance}
d(x,y) := \inf{\big\{r>0: x\in S_\phi(y,r)\mbox{ and } y\in S_\phi(x,r)\big\}} \quad \forall x,y\in\overline{\Omega}.
\end{equation}
Also the induced $d$-ball with center $x\in\overline{\Omega}$ and radius $r>0$ is given by
\[
B_d(x,r) :=\{y\in \overline{\Omega}: d(x,y)<r\}.
\]
The next result is the boundary version of that in  \cite[Section~3]{AFT} where interior sections are considered.

\begin{theorem}\label{homogeneous-space}
Assume that the convex domain $\Omega$ and the convex function $\phi$ satisfy \eqref{global-tang-int}--\eqref{global-sep}. Let  $d: \overline{\Omega}\times\overline{\Omega} \longrightarrow [0,\infty)$ be defined by \eqref{quasi-distance}. Then the function $d$  satisfies
\begin{myindentpar}{1cm}
(i)  $d(x,y)=d(y,x)\,$ for all $\, x,y\in\overline{\Omega}$;\\
(ii) $d(x,y)=0\,$ if and only if $\, x=y$;\\
(iii) $d(x,y) \leq \theta_{\ast}^2 \, \big[ d(x,z) + d(z, y)\big]\,$ for all $\, x,y,z\in\overline{\Omega}$.
\end{myindentpar}
In addition, we have
\begin{equation}\label{topo-equivalent}
S_\phi(x, \frac{r}{2 \theta_{\ast}^2}) \subset B_d(x,r) \subset S_\phi(x, r)
\end{equation}
for all $x\in\overline{\Omega}$ and $r>0$. Here $\theta_{\ast}>1$  is the engulfing constant given by Theorem~\ref{engulfing2}.
\end{theorem}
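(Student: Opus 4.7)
The symmetry in (i) is immediate, since the condition $\{x \in S_\phi(y,r) \text{ and } y \in S_\phi(x,r)\}$ appearing in \eqref{quasi-distance} is symmetric in $x$ and $y$; note also that Lemma~\ref{global-gradient} guarantees this set contains every $r \geq M$, so the infimum is well defined and bounded by $M$. The remaining assertions (iii) and \eqref{topo-equivalent} will follow by direct applications of the engulfing property (Theorem~\ref{engulfing2}); the only delicate point is the implication $d(x,y) = 0 \Rightarrow x = y$ in (ii), which will be the main obstacle.

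For the quasi-triangle inequality (iii), I would fix $x, y, z \in \overline{\Omega}$ and pick $r_1 > d(x, z)$, $r_2 > d(z, y)$, and set $r := r_1 + r_2$. From $x \in S_\phi(z, r_1) \subset S_\phi(z, r)$, Theorem~\ref{engulfing2} yields $S_\phi(z, r) \subset S_\phi(x, \theta_\ast r)$, whence $y \in S_\phi(z, r_2) \subset S_\phi(x, \theta_\ast r)$. The same argument with the roles of $x$ and $y$ exchanged gives $x \in S_\phi(y, \theta_\ast r)$. Hence $d(x, y) \leq \theta_\ast r$, and taking the infimum over $r_1, r_2$ produces
\[
d(x, y) \leq \theta_\ast \bigl(d(x,z) + d(z,y)\bigr) \leq \theta_\ast^2 \bigl(d(x,z) + d(z,y)\bigr),
\]
since $\theta_\ast \geq 1$.

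For the topological equivalence \eqref{topo-equivalent}, the inclusion $B_d(x, r) \subset S_\phi(x, r)$ is immediate from the definition of $d$ as an infimum: if $d(x,y) < r$, then $y \in S_\phi(x, r')$ for some $r' < r$. For the reverse inclusion, suppose $y \in S_\phi(x, r/(2\theta_\ast^2))$. By engulfing, $S_\phi(x, r/(2\theta_\ast^2)) \subset S_\phi(y, r/(2\theta_\ast))$, so $x \in S_\phi(y, r/(2\theta_\ast))$; trivially $y \in S_\phi(x, r/(2\theta_\ast))$ as well (since $\theta_\ast \geq 1$). Therefore $d(x, y) \leq r/(2\theta_\ast) < r$, i.e.\ $y \in B_d(x, r)$.

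Finally, for the non-degeneracy (ii), the implication $x = y \Rightarrow d(x,y) = 0$ is trivial, since $x \in S_\phi(x, r)$ for every $r > 0$. The converse is the main obstacle. If $d(x, y) = 0$, then $y \in S_\phi(x, r)$ and $x \in S_\phi(y, r)$ for every $r > 0$, which forces
\[
\phi(y) - \phi(x) - \nabla \phi(x) \cdot (y - x) \leq 0 \quad \text{and} \quad \phi(x) - \phi(y) - \nabla \phi(y) \cdot (x - y) \leq 0.
\]
Adding these two inequalities and invoking convexity shows that both must be equalities, so $\phi$ is affine along the segment $[x, y]$. To conclude $x = y$ I would appeal to strict convexity of $\phi$: in the interior of $\Omega$ this is the classical Caffarelli strict convexity theorem (applicable since $\det D^2 \phi \geq \lambda > 0$), while if either endpoint lies on $\partial \Omega$ the quadratic separation hypothesis \eqref{global-sep} rules out a non-trivial affine segment meeting $\partial \Omega$. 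Combining these two cases gives $y = x$ and completes the proof.
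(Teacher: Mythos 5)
Your proof is correct, but by a genuinely different route. The paper reduces the theorem to four structural conditions on the family of sections and then invokes Lemma~1 and Lemma~2 of Aimar--Forzani--Toledano; you give a direct, self-contained argument from the engulfing property. Your derivation of (iii) in fact yields the sharper constant $\theta_\ast$ in place of $\theta_\ast^2$: both applications of engulfing are to the same section $S_\phi(z,r)$, so only one factor of $\theta_\ast$ enters, whereas the paper's $\theta_\ast^2$ is forced by the two-sided engulfing hypothesis that the cited lemmas require (the inclusion $S_\phi(y,r)\subset S_\phi(x,Kr)$ already costs $\theta_\ast^2$, cf.\ Lemma~\ref{doubling-engulfing}(i)). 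Your argument for \eqref{topo-equivalent} is also correct. On (ii), the conclusion is right, but the sentence ``classical Caffarelli strict convexity theorem (applicable since $\det D^2\phi\geq\lambda>0$)'' is misleading: a lower bound on $\det D^2\phi$ alone does not give strict convexity. Caffarelli's theorem only shows that the contact set of a supporting hyperplane has no extreme points interior to $\Omega$, so any nontrivial affine segment must extend to $\partial\Omega$; boundary information is then essential. Your appeal to the quadratic separation \eqref{global-sep} supplies exactly that, so your two cases should be combined in sequence rather than applied independently --- which is how the paper argues, after first recording the regularity of $\phi$ up to $\partial\Omega$. The paper also disposes of boundary centers $x\in\partial\Omega$ more cheaply via \eqref{small-sec}, which gives $S_\phi(x,r)\subset B(x,Cr^{1/4})$ and hence $\bigcap_{r>0}S_\phi(x,r)=\{x\}$ with no strict-convexity input at all.
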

\begin{proof}
The theorem follows from Lemma~1 and Lemma~2 in \cite{AFT} with $K :=\theta_{\ast}^2$ provided that the following four conditions are satisfied:
\begin{myindentpar}{1cm}
(a)  $\bigcap_{r>0} S_\phi(x,r) = \{x\}\,$ for every $x\in\overline{\Omega}$;\\
(b)  $\bigcup_{r>0} S_\phi(x,r) = \overline{\Omega}\,$ for every $x\in\overline{\Omega}$;\\
(c) for each $x\in \overline{\Omega}$, the map $r\mapsto S_\phi(x,r)$ is nondecreasing in $r$;\\
(d) for any $y\in  S_\phi(x,r)$, we have $ S_\phi(x,r)\subset  S_\phi(y,\theta_{\ast}^2 r)$ and $ S_\phi(y,r)\subset  S_\phi(x,\theta_{\ast}^2 r)$.
\end{myindentpar}

Observe that $(b)$ holds  by Lemma~\ref{global-gradient} and $(c)$ is obvious. On the other hand,  property $(d)$ is a consequence of Theorem~\ref{engulfing2} and 
Lemma~\ref{doubling-engulfing} (i). 

To verify $(a)$, it suffices to show that $\bigcap_{r>0} S_\phi(x,r) \subset \{x\}$. First, we consider the case $x$ is a boundary
point of $\Omega$. Then, by (\ref{small-sec}), we have

$$\bigcap_{r>0}S_{\phi}(x, r) \subset \bigcap_{r>0} B(x, C r^{1/4}) =\{x\}.$$
Now, consider the case $x$ is an interior point of $\Omega$. 
Let $\tilde{x}\in  \bigcap_{r>0} S_\phi(x,r)$. Then $\phi(\tilde x)< \phi(x) + \nabla\phi(x)\cdot (\tilde x - x) +r$ for every $r>0$. It follows that 
\[
\phi(\tilde x)=\phi(x) + \nabla\phi(x)\cdot (\tilde x - x),
\]
that is, the supporting hyperplane $z= \phi(x) + \nabla\phi(x)\cdot (y - x)$ touches  the graph of $\phi$ at both 
$x$ and $\tilde x$. Since $\Omega$ and $\phi$ satisfy \eqref{global-tang-int}--\eqref{global-sep}, $\phi$ is $C^{1,\alpha}$ on the boundary
$\p\Omega$ for all $\alpha\in (0,1)$ as observed in \cite[Lemma 4.1]{LS}. In fact, we have for all $x_{0}\in \p\Omega$
and for all $x$ in $\overline{\Omega}$ close to $x_{0}$,
$$\abs{\phi(x)-\phi(x_{0})-\nabla\phi(x_{0})\cdot (x-x_{0})}\leq C\abs{x-x_{0}}^2 (\log \abs{x-x_{0}})^2.$$
Consequently, by Caffarelli's Localization Theorem \cite{C1}, we know that $\phi$ is strictly convex in $\Omega$. 
Therefore, we infer that $\tilde x= x$ and so property $(a)$ is proved.
\end{proof}

It follows from properties $(i)-(iii)$ in Theorem~\ref{homogeneous-space} that $d$ is
a quasi-distance on $\overline{\Omega}$ and $(\overline{\Omega}, d)$ is a quasi-metric space. Moreover, as a consequence of Lemma~\ref{doubling-engulfing} (ii) and \eqref{topo-equivalent} we obtain the following doubling property for $d$-balls:
\begin{equation*}
|B_d(x, 2r )| \leq |S(x, 2r)|\leq C \,|S(x,\frac{r}{2 \theta_{\ast}^2})|\leq C\, |B_d(x, r)|\quad \mbox{for all $x\in \overline{\Omega}$ and $r>0$},
\end{equation*}
where $C$ depends only on $\rho,\lambda,\Lambda$ and $n$. Thus, $(\overline{\Omega}, d, 
|\cdot|)$ is a doubling quasi-metric space and hence it is a space of homogeneous type; see \cite[Remark on p. 67]{CW}. We refer readers to  
\cite{CW,DGL} for some results and analysis on this type of spaces.

\bibliographystyle{plain}

\end{document}